\newcommand{\N}{\mathbb N}
\newcommand{\RR}{{{\rm I} \kern -.15em {\rm R} }}
\begin{document}
	\theoremstyle{plain} \newtheorem{thm}{Theorem}[section] \newtheorem{cor}[thm]{Corollary} \newtheorem{lem}[thm]{Lemma} \newtheorem{prop}[thm]{Proposition} \theoremstyle{definition} \newtheorem{defn}{Definition}[section] 
	
	\newtheorem{oss}[thm]{Remark}
	\newtheorem{ex}{Example}[section]
	\newtheorem{lemma}{Lemma}[section]
	\title{Asymptotic flocking for the Cucker-Smale model with\\ time
		variable time delays}
	\author{{\sc Elisa Continelli \footnote{Dipartimento di Ingegneria e Scienze dell'Informazione e Matematica, Universit\`{a} degli Studi di L'Aquila, Via Vetoio, Loc. Coppito, 67010 L'Aquila, Italy (elisa.continelli@graduate.univaq.it). }}}	
	\maketitle
	\pagestyle{fancy}
	\lhead{\scriptsize Asymptotic flocking for the Cucker-Smale model with time
		variable time delays}
\lhead{ Asymptotic flocking for the Cucker-Smale model with time
	variable time delays}	
\rhead{}
	\begin{abstract}
		In this paper, we investigate a Cucker-Smale flocking model with varying time delay. We establish exponential asymptotic flocking without requiring smallness assumptions on the time delay size and the monotonicity of the influence function.
	\end{abstract}
	
	\noindent\providecommand{\mathsub}[1]{\textbf{2020 Mathematics Subject Classification:} #1}
	\mathsub{ 34D05, 34K20, 92D25, 92D50}	
	\\\providecommand{\keywords}[1]{\textbf{Keywords:} #1}
	\keywords{Cucker-Smale model, time delay, influence function, asymptotic flocking.}
	
	\vspace{5 mm}
	
	\section{Introduction}
	\setcounter{equation}{0}
	Multiagent systems have been largely studied in these last years, by virtue of their wide application to many scientific fields, such as biology \cite{Cama, CS1}, economics \cite{Marsan}, robotics \cite{Bullo, Jad}, control theory \cite{CFPT, Borzi, PRT, WCB, Aydogdu,  Piccoli}, social sciences \cite{Bellomo, Campi}. In this regard, we mention the Hegselmann-Krause opinion formation model \cite{HK} and the Cucker-Smale flocking model \cite{CS1}. Typically, for the solutions of such systems, the convergence to consensus, in the case of the Hegselmann-Krause model, or the exhibition of asymptotic flocking, in the case of the Cucker-Smale model, is investigated. Also, we mention \cite{HT, carr, CCR} for the analysis of the kinetic version of the Cucker-Smale model. The Cucker-Smale model is presented for nonsymmetric interaction coefficients in \cite{MT}. 
	
	In the theory of multiagent systems, the introduction of time delays is reasonable. For instance, in opinion formation or flocking models, a particle does not necessarily receive information from the other agents instantaneously. In the applications, it is more realistic to contemplate models in which each particle has to wait for a while before collecting all the information coming from the other particles of the system (see \cite{Kuang, Nic}).  
	
	In this paper, we focus on a flocking model with time variable time delays. Multiagent systems with delay have already been studied in some works, among them \cite{LW, DH, H3}. Some flocking results for the delayed Cucker-Smale model have been established in \cite{HM, CH, CL, PT, CP}. In all these works, the authors require a smallness assumption on the time delay size. Also, convergence to consensus for the Hegselmann-Krause model with small time delays has been investigated in \cite{CPP, H, P}. Very recently, a consensus result for the constant time delay case is proved in \cite{H3}, without requiring upper bounds on the time delay. We also refer to \cite{Lu} for a consensus result without any restrictions on the constant time delay but in the particular case of constant interaction coefficients. For the Cucker-Smale model with constant time delay, asymptotic flocking is achieved by \cite{Cartabia} without assuming the smallness of the time delay size. A flocking result for the Cucker-Smale model with leadership and time delay without upper bounds is obtained by \cite{PR}. 
	
	In this paper, we extend the argument of \cite{Cartabia} to a Cucker-Smale flocking model with time-varying time delay. We succed in improving previous flocking results by removing upper bounds on the time delay size. To be precise, we deal with a time delay function $\tau(\cdot):[0,+\infty)\rightarrow[0,+\infty)$ that satisfies
	$$0\leq \tau(t)\leq \bar{\tau},\quad\forall t \geq 0,$$
	for some positive constant $\bar{\tau}$, and we prove the asymptotic flocking for the solutions of the delayed Cucker-Smale model without requiring any smallness assumptions on the time delay size $\bar{\tau}$. Moreover, we are able to weaken the monotonicity hypothesis on the influence function that is assumed in \cite{Cartabia}. Indeed, in \cite{Cartabia} the influence function is a continuous function of the distance between the agents' positions that is required to be nonincreasing. Here, we rather consider an influence function that it is assumed to be just positive, bounded and continuous. 
	\\\\Consider a finite set of $N\in\N$ particles, with $N\geq 2 $. Let $(x_{i}(t))\in \RR^d$ and $(v_{i}(t))\in \RR^d$ denote the position and the velocity of the $i$-th particle at time $t$, respectively. We shall denote with $\lvert\cdot \rvert$ and $\langle \cdot,\cdot \rangle$ the usual norm and scalar product in $\RR^{d}$, respectively. The interactions between the elements of the system are described by the following Cucker-Smale type model with a variable time delay
	\begin{equation}\label{csp}
		\begin{cases}
		\frac{d}{dt}x_{i}(t)=v_{i}(t),\quad &t>0, \,\,\forall i=1,\dots,N,\\\frac{d}{dt}v_{i}(t)=\underset{j:j\neq i}{\sum}a_{ij}(t)(v_{j}(t-\tau(t))-v_{i}(t)),\quad 	&t>0,\,\,\forall i=1,\dots,N,	
		\end{cases}
	\end{equation}
	with weights $a_{ij}$ of the form
	\begin{equation}\label{weight}
	a_{ij}(t):=\frac{1}{N-1}\psi( \lvert x_{i}(t)-x_{j}(t-\tau(t))\rvert), \quad\forall t>0,\, \forall i,j=1,\dots,N,
	\end{equation}
	where $\psi:\RR\rightarrow \RR$ is a positive function. The time delay function $\tau:[0,\infty)\rightarrow[0,\infty)$ is assumed to be continuous and it satisfies  
	\begin{equation}\label{delay}
		0\leq \tau(t)\leq \bar{\tau}, \quad \forall t\geq 0,
	\end{equation}
	for some positive constant $\bar{\tau}$. \\The initial conditions
	\begin{equation}\label{incond}
		x_{i}(s)=x^{0}_{i}(s),\quad v_{i}(s)=v^{0}_{i}(s), \quad \forall s\in [-\bar{\tau},0],\,\forall i=1,\dots,N,
	\end{equation}
	are assumed to be continuous functions.
	\\The influence function $\psi$ is assumed to be continuous. Moreover, we assume that it is bounded and we denote with
	$$K:=\lVert \psi\rVert_{\infty}.$$ 
	For existence results related to system \eqref{csp} we refer to \cite{Halanay, HL}. Here, we are interested in the analysis of the asymptotic flocking of the solutions to \eqref{csp}.
	\\For each $t\geq-\bar{\tau}$, we define the diameters in space and in velocity $$d_{X}(t):=\max_{i,j=1,\dots,N}\lvert x_{i}(t)-x_{j}(t)\rvert,$$
	$$d_{V}(t):=\max_{i,j=1,\dots,N}\lvert v_{i}(t)-v_{j}(t)\rvert.$$
	\begin{defn} (Unconditional flocking)\label{unflock} We say that a solution $\{(x_{i},v_{i})\}_{i=1,\dots,N}$ to system \eqref{csp} exhibits \textit{asymptotic flocking} if it satisfies the two following conditions:
		\begin{enumerate}
			\item there exists a positive constant $d^{*}$ such that$$\sup_{t\geq-\bar{\tau}}d_{X}(t)\leq d^{*};$$
			\item$\underset{t \to\infty}{\lim}d_{V}(t)=0.$
		\end{enumerate}
	\end{defn}
	Our main result is the following.
\begin{thm} (Unconditional flocking)\label{uf}
	Assume that $\psi:\RR\rightarrow\RR$ is a positive, bounded, continuous function that satisfies
	\begin{equation}\label{infint}
	\int_{0}^{+\infty}\min_{r\in [0,x]}\psi(r)dx=+\infty.
	\end{equation}Moreover, let $x^{0}_{i},v_{i}^{0}:[-\bar{\tau},0]\rightarrow \RR^{d}$ be continuous functions, for any $i=1,\dots,N$. 
	Then, for every solution $\{(x_{i},v_{i})\}_{i=1,\dots,N}$ to \eqref{csp} with the initial conditions \eqref{incond}, there exists a positive constant $d^{*}$ such that \begin{equation}\label{posbound}
		\sup_{t\geq-\bar{\tau}}d_{X}(t)\leq d^{*},
	\end{equation}
	and there exists another positive constant $C$, independent of $N$, for which the following exponential decay estimate holds
	\begin{equation}\label{vel}
		d_{V}(t)\leq \left(\max_{i,j=1,\dots,N}\,\,\max_{r,s\in [-\bar{\tau},0]}\lvert v_{i}(r)-v_{j}(s)\rvert\right) e^{-C(t-2\bar{\tau})},\quad \forall t\geq-\bar{\tau}.
	\end{equation}
\end{thm}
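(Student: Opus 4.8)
The plan is to run a continuation (bootstrap) argument that couples the space diameter $d_X$ with the velocity diameter $d_V$, the engine being a positive lower bound on $\psi$ that persists as long as $d_X$ stays controlled, closed off by the divergence condition \eqref{infint}. I would separate the two conclusions: first an a priori velocity bound plus a decay estimate valid on any interval where $d_X$ is small, and then the fat-tail argument showing $d_X$ can never leave that regime.

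The first step is to reduce to scalar quantities by projecting onto an arbitrary unit vector $e$. Writing $M_e(t)=\max_i\langle v_i(t),e\rangle$ and $m_e(t)=\min_i\langle v_i(t),e\rangle$, I would introduce the running extrema over the delay window,
$$\overline{M}_e(t):=\max_{s\in[t-\bar\tau,t]}M_e(s),\qquad \underline{m}_e(t):=\min_{s\in[t-\bar\tau,t]}m_e(s),$$
and prove that $\overline{M}_e$ is non-increasing and $\underline{m}_e$ non-decreasing. The mechanism is that at any time where the current maximum is attained by some particle $i$, every delayed value $\langle v_j(t-\tau(t)),e\rangle$ satisfies $\langle v_j(t-\tau(t)),e\rangle\le\overline{M}_e(t)$ because $t-\tau(t)\in[t-\bar\tau,t]$; hence the Dini derivative of $\langle v_i(\cdot),e\rangle$ is $\le 0$ whenever the current max touches the running max, so the current max can never exceed its past running value (the symmetric statement holds for the minimum). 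Ranging $e$ over a basis yields an a priori bound $\lvert v_i(t)\rvert\le R$ for all $t\ge-\bar\tau$, whence $\tfrac{d^{+}}{dt}d_X(t)\le d_V(t)$ and the crude control $\lvert x_i(t)-x_j(t-\tau(t))\rvert\le d_X(t)+\bar\tau R$.

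The heart of the matter is the decay step. On a time interval where $d_X\le d^{*}$, the previous bound gives $\lvert x_i(t)-x_j(t-\tau(t))\rvert\le d^{*}+\bar\tau R=:\tilde d$, and since $\psi$ is continuous and strictly positive, $\psi\ge\psi_{*}:=\min_{[0,\tilde d]}\psi>0$ there. Thus $\beta_i(t):=\sum_{j\ne i}a_{ij}(t)\ge\psi_{*}$, a bound \emph{independent of $N$} since the $\tfrac{1}{N-1}$ normalization is cancelled by the $N-1$ summands, which is what will make the rate $C$ $N$-independent. I would then derive for the velocity diameter a delay differential inequality of Halanay type, of the form $\tfrac{d^{+}}{dt}d_V(t)\le-\psi_{*}\,d_V(t)+(\text{delayed feedback in }\sup_{[t-\bar\tau,t]}d_V)$, and invoke the Halanay inequality to obtain $d_V(t)\le d_V^{0}\,e^{-C(t-2\bar\tau)}$, where $d_V^{0}:=\max_{i,j}\max_{r,s\in[-\bar\tau,0]}\lvert v_i(r)-v_j(s)\rvert$. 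The main obstacle is precisely this inequality: unlike the undelayed model, the instantaneous damping rate and the delayed feedback rate are both governed by $\beta_i$, so the naive Halanay condition (strict dominance of damping over feedback) is only borderline; one must use the monotonicity of $\overline{M}_e,\underline{m}_e$ from Step 1 to extract the strict contraction, showing that across a window of length $\sim\bar\tau$ every projected velocity is pushed inward by a fixed fraction of $\overline{M}_e-\underline{m}_e$ before the old (larger) values can feed back. This window bookkeeping is what produces the shift $2\bar\tau$ in \eqref{vel}.

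Finally I would close the loop with the fat-tail condition. Fixing the threshold $d^{*}$ in advance and setting $T^{*}=\sup\{T:\ d_X(t)<d^{*}\ \text{for all } t\le T\}$, the decay estimate holds on $[0,T^{*})$, so integrating $\tfrac{d^{+}}{dt}d_X\le d_V\le d_V^{0}e^{-C(t-2\bar\tau)}$ controls the total growth of $d_X$; tracking a delayed Lyapunov functional of the form $d_V(t)+\int^{d_X(t)}\psi$ gives a bound $\int_{d_X(0)}^{d_X(t)}\psi(r)\,dr\le C' d_V^{0}$. Because \eqref{infint} forces $\int_0^{\infty}\psi=\infty$, this confines $d_X(t)$ below the chosen finite $d^{*}$, so $T^{*}=\infty$, \eqref{posbound} holds, and the decay \eqref{vel} is then valid for all $t\ge-\bar\tau$. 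The one delicate point here is the compatibility of the constants—$\psi_{*}$ depends on $d^{*}$ while the admissible growth of $d_X$ depends on $\psi_{*}$—which is exactly why the \emph{divergence} of $\int\psi$, rather than mere positivity, is indispensable.
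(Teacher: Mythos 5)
Your proposal follows essentially the same route as the paper: the projection lemma showing the velocity hull over a delay window is invariant, the resulting uniform bounds on $\lvert v_i\rvert$ and on $\lvert x_i(t)-x_j(t-\tau(t))\rvert$, the strict contraction of the velocity diameter across windows of length $\bar\tau$ driven by a lower bound on $\psi$ (replacing the borderline Halanay estimate), and the closing Lyapunov functional of the form $\mathcal{D}(t)+c\int_0^{\,\cdot+d_X}\psi$ whose monotonicity, combined with \eqref{infint}, bounds $d_X$ and makes the contraction rate uniform. The only cosmetic difference is that you wrap the final step in a continuation argument with a threshold $d^{*}$, whereas the paper lets the contraction rate degrade with the running maximum of $d_X$ so the functional is globally nonincreasing and no bootstrap is needed; the underlying mechanism is identical.
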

\begin{oss}
	Let us note that, if the influence function $\psi$ is nonincreasing, then the assumption \eqref{infint} reduces to 
	\begin{equation}\label{intcart}
		\int_{0}^{+\infty}\psi(x)dx=+\infty.
	\end{equation}
	The condition \eqref{intcart} is the one assumed in \cite{Cartabia} in order to achieve the unconditional flocking for the solutions of the Cucker-Smale model.
\end{oss}
\vspace{0.7cm}The rest of this paper is organized as follows. In Section 2 we present some preliminary definitions and results that will be needed for the proof of Theorem \ref{unflock}. In Section 3 we prove our asymptotic flocking result by introducing a suitable Lyapunov functional. 
\section{Preliminaries}\setcounter{equation}{0}
\noindent Let $\{(x_{i},v_{i})\}_{i=1,\dots,N}$  be solution to \eqref{csp} under the initial conditions \eqref{incond}. In this section, we assume that the hypotheses of Theorem \ref{uf} are satisfied.
\par We now present some auxiliary lemmas that generalize and extend the analogous results in \cite{Cartabia}.
\begin{lem}\label{lemma1}
	For each $v\in \RR^{d}$ and $T\geq 0$, we have that
	\begin{equation}\label{inpro}
		\min_{j=1,\dots,N}\min_{s\in [T-\bar{\tau},T]}\langle v_{j}(s),v\rangle\leq \langle v_{i}(t),v\rangle\leq \max_{j=1,\dots,N}\max_{s\in [T-\bar{\tau},T]}\langle v_{j}(s),v\rangle,
	\end{equation}
	for all $t\geq T-\bar{\tau}$ and $i=1,\dots,N$.
\end{lem}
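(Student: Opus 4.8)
The plan is to prove the upper bound in \eqref{inpro}; the lower bound then follows at once by replacing $v$ with $-v$. Fix $v\in\RR^{d}$ and $T\geq 0$, and set $w_{i}(t):=\langle v_{i}(t),v\rangle$. Taking the scalar product of the second equation in \eqref{csp} with $v$ reduces the system to the scalar delayed problem
\begin{equation*}
	\frac{d}{dt}w_{i}(t)=\sum_{j:j\neq i}a_{ij}(t)\big(w_{j}(t-\tau(t))-w_{i}(t)\big),\qquad t>0.
\end{equation*}
Writing $M:=\max_{j=1,\dots,N}\max_{s\in[T-\bar\tau,T]}w_{j}(s)$, the inequality $w_{i}(t)\leq M$ holds trivially for $t\in[T-\bar\tau,T]$, so the task is to propagate it to all $t\geq T$. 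Two structural facts drive the argument: since $\psi$ is positive, every weight $a_{ij}(t)$ in \eqref{weight} is nonnegative (indeed positive), so that $\beta_{i}(t):=\sum_{j\neq i}a_{ij}(t)\geq 0$; and the bound \eqref{delay} gives $t-\tau(t)\geq t-\bar\tau\geq T-\bar\tau$ whenever $t\geq T$, so the delayed arguments on the right-hand side always lie in the region $[T-\bar\tau,t]$ where we are attempting to control the $w_{j}$.

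I would first attempt the natural first-crossing argument: let $t^{*}$ be the earliest instant $t\geq T$ at which $\max_{i}w_{i}(t)=M$ while the value is exceeded immediately afterwards, pick an index $i$ realizing the maximum there, and differentiate. Because $t^{*}-\tau(t^{*})\in[T-\bar\tau,t^{*}]$, every delayed value $w_{j}(t^{*}-\tau(t^{*}))$ is $\leq M$, and since $a_{ij}(t^{*})\geq 0$ one obtains $\frac{d}{dt}w_{i}(t^{*})\leq 0$. \emph{The main obstacle} is that this only yields a non-strict inequality, which is fully consistent with $w_{i}$ leaving the level $M$ through a horizontal tangent (a higher-order crossing); the first-derivative test alone therefore does not close the argument.

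To remove this difficulty I would pass to the integrated form. Solving the scalar equation with integrating factor $\exp\!\big(\int_{T}^{t}\beta_{i}\big)$ gives
\begin{equation*}
	w_{i}(t)=e^{-\int_{T}^{t}\beta_{i}(r)\,dr}\,w_{i}(T)+\int_{T}^{t}e^{-\int_{s}^{t}\beta_{i}(r)\,dr}\sum_{j\neq i}a_{ij}(s)\,w_{j}(s-\tau(s))\,ds .
\end{equation*}
Using $\sum_{j\neq i}a_{ij}(s)=\beta_{i}(s)$, a direct computation shows that the weight $e^{-\int_{T}^{t}\beta_{i}}$ and the kernel $e^{-\int_{s}^{t}\beta_{i}}\beta_{i}(s)$ integrate in total to $1$; hence $w_{i}(t)$ is a \emph{convex combination} of $w_{i}(T)$ and of the delayed values $w_{j}(s-\tau(s))$, $s\in[T,t]$, whose arguments all lie in $[T-\bar\tau,t]$. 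Introduce the continuous, non-decreasing function $W(t):=\max_{i}\max_{r\in[T-\bar\tau,t]}w_{i}(r)$, which equals $M$ at $t=T$. Suppose $W(t_{1})>M$ for some $t_{1}>T$, and let $t^{*}$ be the last instant in $[T,t_{1}]$ at which $W$ takes the value $M$, so that $W(t)\leq M$ on $[T-\bar\tau,t^{*}]$ and $W(t)>M$ on $(t^{*},t_{1}]$. The maximum $W(t_{1})$ is then attained at some $(j^{*},r^{*})$ with $r^{*}\in(t^{*},t_{1}]$.

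Evaluating the convex-combination identity at $(j^{*},r^{*})$ and splitting the integral according to whether $s-\tau(s)\leq t^{*}$ (where $w_{k}\leq M$) or $s\in(t^{*},r^{*}]$ (where $w_{k}\leq W(r^{*})\leq W(t_{1})$), one obtains
\begin{equation*}
	W(t_{1})=w_{j^{*}}(r^{*})\leq e^{-\int_{t^{*}}^{r^{*}}\beta_{j^{*}}}M+\Big(1-e^{-\int_{t^{*}}^{r^{*}}\beta_{j^{*}}}\Big)W(t_{1}),
\end{equation*}
after telescoping the two kernel integrals. Since the factor $e^{-\int_{t^{*}}^{r^{*}}\beta_{j^{*}}}$ is strictly positive, this forces $W(t_{1})\leq M$, contradicting $W(t_{1})>M$. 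Hence $w_{i}(t)\leq M$ for every $t\geq T$ and every $i$, which is the upper bound in \eqref{inpro}; applying the same conclusion to $-v$ yields the lower bound, completing the proof.
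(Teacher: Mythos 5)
Your proof is correct, but it reaches the conclusion by a genuinely different mechanism than the paper's. The paper resolves the same difficulty you identify (the first-derivative test at a crossing only gives a non-strict inequality) by an $\epsilon$-relaxation: for each $\epsilon>0$ it sets $S^{\epsilon}=\sup\{t>T:\max_i\langle v_i(s),v\rangle<M+\epsilon\ \ \forall s\in[T,t)\}$, bounds $\frac{d}{dt}\langle v_i(t),v\rangle\leq K(M+\epsilon-\langle v_i(t),v\rangle)$ using only the crude estimate $\sum_{j\neq i}a_{ij}\leq K$, and applies Gronwall to obtain $\max_i\langle v_i(t),v\rangle\leq M+\epsilon-\epsilon e^{-K(S^{\epsilon}-T)}$ on $(T,S^{\epsilon})$, which contradicts $\lim_{t\to S^{\epsilon-}}\max_i\langle v_i(t),v\rangle=M+\epsilon$ if $S^{\epsilon}<\infty$; letting $\epsilon\to0$ finishes. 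You instead keep the exact integrating factor $\exp\bigl(\int\beta_i\bigr)$, read the variation-of-constants formula as a convex combination, and run a last-crossing argument on the running maximum $W$; the strict positivity of $e^{-\int_{t^{*}}^{r^{*}}\beta_{j^{*}}}$ (guaranteed since $\beta_i\leq K$ keeps the exponent finite) then forces $W(t_1)\leq M$ directly, with no $\epsilon$ and no differential inequality. Both routes rest on the same two structural inputs — nonnegativity of the weights and $t-\tau(t)\geq T-\bar{\tau}$ for $t\geq T$ — and yours is arguably cleaner in avoiding the limit $\epsilon\to0$, while the paper's avoids the explicit integral representation. One cosmetic remark: your case split of the integral according to whether $s-\tau(s)\leq t^{*}$ is unnecessary, since bounding every delayed value by $W(t_1)$ already closes the contradiction; it does no harm.
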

\begin{proof}
	First of all, we can note that the inequalities in \eqref{inpro} are satisfied for every $t\in [T-\bar{\tau},T]$.
	\\Now, let $T\geq 0$. Given a vector $v\in \RR^{d}$, we set $$M=\max_{j=1,\dots,N}\max_{s\in[T-\bar{\tau},T]}\langle v_{j}(s),v\rangle.$$
	For all $\epsilon >0$, let us define
	$$K^{\epsilon}:=\left\{t>T :\max_{i=1,\dots,N}\langle v_{i}(s),v\rangle < M+\epsilon,\,\forall s\in [T,t)\right\}.$$
	By continuity, we have that $K^{\epsilon}\neq\emptyset$. Thus, denoting with $$S^{\epsilon}:=\sup K^{\epsilon},$$
	it holds that $S^{\epsilon}>T$. \\We claim that $S^{\epsilon}=+\infty$. Indeed, suppose by contradiction that $S^{\epsilon}<+\infty$. Note that, by definition of $S^{\epsilon}$, it turns out that \begin{equation}\label{maximum}
	\max_{i=1,\dots,N}\langle v_{i}(t),v\rangle<M
	+\epsilon,\quad \forall t\in (T,S^{\epsilon}),
	\end{equation}
	and \begin{equation}\label{teps}
	\lim_{t\to S^{\epsilon-}}\max_{i=1,\dots,N}\langle v_{i}(t),v\rangle=M+\epsilon.
	\end{equation}
	For all $i=1,\dots,N$ and $t\in (T,S^{\epsilon})$, we compute
	$$\frac{d}{dt}\langle v_{i}(t),v\rangle=\frac{1}{N-1}\sum_{j:j\neq i}\psi(\lvert x_{i}(t)- x_{j}(t-\tau(t))\rvert)\langle v_{j}(t-\tau(t))-v_{i}(t),v\rangle.$$
	Notice that, being $t\in (T,S^{\epsilon})$, then  $t-\tau(t)\in (T-\bar{\tau}, S^{\epsilon})$ and \begin{equation}\label{t-tau}
	\langle v_{j}(t-\tau(t)),v\rangle< M+\epsilon,\quad \forall j=1, \dots, N.
	\end{equation}
	Moreover, \eqref{maximum} implies that $$\langle v_{i}(t),v\rangle<M+\epsilon,$$
	so that $$M+\epsilon-\langle v_{i}(t),v\rangle\geq 0.$$Combining this last fact with \eqref{t-tau} and by recalling that $\psi$ is bounded, we can write $$\frac{d}{dt}\langle v_{i}(t),v\rangle\leq \frac{1}{N-1}\sum_{j:j\neq i}\psi(\lvert x_{i}(t)- x_{j}(t-\tau(t))\rvert)(M+\epsilon-\langle v_{i}(t),v\rangle)$$$$\leq K(M+\epsilon-\langle v_{i}(t),v\rangle), \quad\forall t\in (T, S^{\epsilon}).$$
	Then, from the Gronwall's inequality we get
	$$\begin{array}{l}
	\vspace{0.3cm}\displaystyle{
		\langle v_{i}(t),v\rangle\leq e^{-K(t-T)}\langle v_{i}(T),v\rangle+K(M+\epsilon)\int_{T}^{t}e^{-K(t-s)}ds}\\
	\vspace{0.3cm}\displaystyle{\hspace{1.5 cm}
		=e^{-K(t-T)}\langle v_{i}(T),v\rangle+(M+\epsilon)(1-e^{-K(t-T)})}\\
	\vspace{0.3cm}\displaystyle{\hspace{1.5 cm}
		\leq e^{-K(t-T)}M+M+\epsilon -Me^{-K(t-T)}-\epsilon e^{-K(t-T)}}\\
	\vspace{0.3cm}\displaystyle{\hspace{1.5 cm}
		=M+\epsilon-\epsilon e^{-K(t-T)}}\\
	\displaystyle{\hspace{1.5 cm}\leq M+\epsilon-\epsilon e^{-K(S^{\epsilon}-T)},}
	\end{array}
	$$for all $t\in (T, S^{\epsilon})$.	We have so proved that, $\forall i=1,\dots, N,$
	$$\langle v_{i}(t),v\rangle\leq M+\epsilon-\epsilon e^{-K(S^{\epsilon}-T)}, \quad \forall t\in (T,S^{\epsilon}).$$
	Thus, we get
	\begin{equation}\label{limit}
	\max_{i=1,\dots,N} \langle v_{i}(t),v\rangle\leq M+\epsilon-\epsilon e^{-K(S^{\epsilon}-T)}, \quad \forall t\in (T,S^{\epsilon}).
	\end{equation}
	Letting $t\to S^{\epsilon-}$ in \eqref{limit}, from \eqref{teps} we have that $$M+\epsilon\leq M+\epsilon-\epsilon e^{-K(S^{\epsilon}-T)}<M+\epsilon,$$
	which is a contraddiction. Thus, $S^{\epsilon}=+\infty$, which means that $$\max_{i=1,\dots,N}\langle v_{i}(t),v\rangle<M+\epsilon, \quad \forall t>T.$$
	From the arbitrariness of $\epsilon$ we can conclude that $$\max_{i=1,\dots,N}\langle v_{i}(t),v\rangle\leq M, \quad \forall t>T,$$
	from which $$\langle v_{i}(t),v\rangle\leq M, \quad \forall t>T, \,\forall i=1,\dots,N,$$
	which proves the second inequality in \eqref{inpro}. 
	\\Now, to prove the other inequality, let $v\in \RR^{d}$ and define $$m=\min_{j=1,\dots,N}\min_{s\in[T-\bar{\tau},T]}\langle v_{j}(s),v\rangle.$$
	Then, for all $i=1,\dots,N$ and $t>T$, by applying the second inequality in \eqref{inpro} to the vector $-v\in\RR^{d}$ we get $$-\langle v_{i}(s),v\rangle=\langle v_{i}(t),-v\rangle\leq \max_{j=1,\dots,N}\max_{s\in[T-\bar{\tau},T]}\langle v_{j}(s),-v\rangle$$$$\quad\quad\,\,\,=-\min_{j=1,\dots,N}\min_{s\in[T-\bar{\tau},T]}\langle v_{j}(s),v\rangle=-m,$$
	from which $$\langle v_{i}(s),v\rangle\geq m.$$
	Thus, also the first inequality in \eqref{inpro} is fullfilled.
\end{proof}
We now introduce some notation.
\begin{defn}\label{notation}
	We define 
	$$D_{0}=\max_{i,j=1,\dots,N}\,\,\max_{s,t\in [-\bar{\tau},0]}\lvert v_{i}(s)-v_{j}(t)\rvert,$$
	and in general, $\forall n\in \mathbb{N}$,
	$$D_{n}:=\max_{i,j=1,\dots,N}\,\,\max_{s,t\in [n\bar{\tau}-\bar{\tau},n\bar{\tau}]}\lvert v_{i}(s)-v_{j}(t)\rvert.$$
\end{defn}
\noindent Notice that inequality \eqref{vel} can be written as 
$$d_{V}(t)\leq D_{0}e^{-C(t-2\bar{\tau})},\quad \forall t\geq-\bar{\tau}.$$
Let us denote with $\mathbb{N}_{0}:=\mathbb{N}\cup\{0\}$.
\begin{lem}	\label{lemma2}
	For each $n\in \mathbb{N}_{0}$ we have that 
	\begin{equation}\label{dec}
		D_{n+1}\leq D_{n}.
	\end{equation}
\end{lem}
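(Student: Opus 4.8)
The plan is to reduce the estimate on the vector-valued velocity differences to a scalar statement about inner products, where Lemma \ref{lemma1} can be brought to bear directly. Fix $n\in\mathbb{N}_{0}$ and observe that, by definition,
$$D_{n+1}=\max_{i,j=1,\dots,N}\,\,\max_{s,t\in[n\bar{\tau},(n+1)\bar{\tau}]}\lvert v_{i}(s)-v_{j}(t)\rvert.$$
Hence it suffices to show that $\lvert v_{i}(s)-v_{j}(t)\rvert\leq D_{n}$ for every pair of indices $i,j$ and all $s,t\in[n\bar{\tau},(n+1)\bar{\tau}]$, and then take the maximum.

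First I would apply Lemma \ref{lemma1} with the choice $T=n\bar{\tau}$. Since $T-\bar{\tau}=(n-1)\bar{\tau}$ and every point of $[n\bar{\tau},(n+1)\bar{\tau}]$ satisfies $t\geq n\bar{\tau}\geq T-\bar{\tau}$, the lemma guarantees that, for an arbitrary direction $w\in\RR^{d}$, all the quantities $\langle v_{i}(s),w\rangle$ and $\langle v_{j}(t),w\rangle$ with $s,t\in[n\bar{\tau},(n+1)\bar{\tau}]$ lie between $\min_{k}\min_{r\in[(n-1)\bar{\tau},n\bar{\tau}]}\langle v_{k}(r),w\rangle$ and $\max_{k}\max_{r\in[(n-1)\bar{\tau},n\bar{\tau}]}\langle v_{k}(r),w\rangle$. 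Subtracting these two bounds yields
$$\langle v_{i}(s)-v_{j}(t),w\rangle\leq\max_{k,l=1,\dots,N}\,\,\max_{r,r'\in[(n-1)\bar{\tau},n\bar{\tau}]}\langle v_{k}(r)-v_{l}(r'),w\rangle.$$

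The key step is now to choose the direction $w$ so that the left-hand side reproduces the norm we wish to bound. Given $i,j,s,t$, I would take $w=\frac{v_{i}(s)-v_{j}(t)}{\lvert v_{i}(s)-v_{j}(t)\rvert}$ whenever the difference is nonzero (the case $v_{i}(s)=v_{j}(t)$ being trivial since $D_{n}\geq 0$). With this choice the left-hand side equals $\lvert v_{i}(s)-v_{j}(t)\rvert$, while on the right-hand side the Cauchy-Schwarz inequality gives $\langle v_{k}(r)-v_{l}(r'),w\rangle\leq\lvert v_{k}(r)-v_{l}(r')\rvert$ because $\lvert w\rvert=1$. Since $r,r'\in[(n-1)\bar{\tau},n\bar{\tau}]=[n\bar{\tau}-\bar{\tau},n\bar{\tau}]$, each such term is at most $D_{n}$ by definition. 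Combining these facts gives $\lvert v_{i}(s)-v_{j}(t)\rvert\leq D_{n}$, and taking the maximum over all admissible $i,j,s,t$ produces $D_{n+1}\leq D_{n}$.

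There is no genuine obstacle here: the statement is a direct consequence of Lemma \ref{lemma1}. The only points requiring care are the bookkeeping of the intervals — one must check that invoking the lemma at $T=n\bar{\tau}$ controls the velocities on the next window $[n\bar{\tau},(n+1)\bar{\tau}]$ precisely in terms of the previous window $[(n-1)\bar{\tau},n\bar{\tau}]$ defining $D_{n}$ — and the standard duality trick of testing against the normalized difference vector to convert the scalar inner-product bound back into a bound on the Euclidean norm.
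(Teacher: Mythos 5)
Your proposal is correct and follows essentially the same route as the paper: invoke Lemma \ref{lemma1} with $T=n\bar{\tau}$, test against the normalized difference vector, and close with Cauchy--Schwarz to pass from the inner-product bound back to the norm. The only cosmetic difference is that the paper first selects the maximizing pair $(i,j,t_1,t_2)$ realizing $D_{n+1}$ and handles the degenerate case $D_{n+1}=0$ separately, whereas you bound all pairs uniformly before taking the maximum; the content is identical.
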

\begin{proof}
	Let $n\in\mathbb{N}_{0}$. If $D_{n+1}=0$, then of course $$D_{n}=\max_{i,j=1,\dots,N}\,\,\max_{s,t\in [n\bar{\tau}-\bar{\tau},n\bar{\tau}]}\lvert v_{i}(s)-v_{j}(t)\rvert\geq 0=D_{n+1}.$$ 
	So, suppose that $D_{n+1}>0$. Let $i,j=1,\dots,N$, $t_{1},t_{2}\in [n\bar{\tau},n\bar{\tau}+\bar{\tau}]$ be such that $$D_{n+1}=\lvert v_{i}(t_{1})-v_{j}(t_{2})\rvert.$$
	We set $$v=\frac{v_{i}(t_{1})-v_{j}(t_{2})}{\lvert v_{i}(t_{1})-v_{j}(t_{2})\rvert}.$$
	Then $v$ is a unit vector and, by using \eqref{inpro} with $T=n\bar{\tau}$ and the Cauchy-Schwarz inequality, we have that
	$$\begin{array}{l}
	\vspace{0.3cm}\displaystyle{D_{n+1}=\langle v_{i}(t_{1})-v_{j}(t_{2}),v\rangle=\langle v_{i}(t_{1}),v\rangle-\langle v_{j}(t_{2}),v\rangle}\\
	\vspace{0.3cm}\displaystyle{\hspace{0.9 cm}\leq \max_{l=1,\dots,N}\max_{s\in [n\bar{\tau}-\bar{\tau},n\bar{\tau}]}\langle v_{l}(s),v\rangle-\min_{l=1,\dots,N}\min_{s\in [n\bar{\tau}-\bar{\tau},n\bar{\tau}]}\langle v_{l}(s),v\rangle}\\
	\vspace{0.3cm}\displaystyle{\hspace{0.9 cm}\leq \max_{l,k=1,\dots,N}\max_{s,t\in [n\bar{\tau}-\bar{\tau},n\bar{\tau}]}\langle v_{l}(s)-v_{k}(t),v\rangle}\\
	\displaystyle{\hspace{0.9 cm}=\max_{l,k=1,\dots,N}\max_{s,t\in [n\bar{\tau}-\bar{\tau},n\bar{\tau}]} \lvert v_{l}(s)-v_{k}(t)\rvert=D_{n}.}
	\end{array}$$
\end{proof}
With an analogous argument, one can find a bound on $\vert v_i(t)\vert,$ uniform with respect to $t$ and $i=1,\dots,N$. Indeed, we have the following lemma.
\begin{lem}
	For every $i=1,\dots,N$, we have that
	\begin{equation}\label{bounvel}
	\lvert v_{i}(t)\rvert\leq R^{0}_{V}, \quad \forall t\geq-\bar{\tau},
	\end{equation}
	where
	$$R_{V}^{0}:=\max_{i=1,\dots,N}\,\,\max_{s\in [-\bar{\tau},0]}\lvert v_{i}(s)\rvert.$$
\end{lem}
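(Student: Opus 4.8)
The plan is to deduce this bound directly from Lemma~\ref{lemma1}, exactly in the spirit of the diameter estimate $D_{n+1}\leq D_n$ obtained in Lemma~\ref{lemma2}: the norm $\lvert v_i(t)\rvert$ can be recovered from the family of inner products $\langle v_i(t),v\rangle$ by choosing the direction $v$ optimally, and each such inner product is already controlled by \eqref{inpro}. So the whole statement should reduce to a one-line application of the second inequality in \eqref{inpro} combined with the Cauchy--Schwarz inequality.

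Concretely, I would fix $i\in\{1,\dots,N\}$ and $t\geq-\bar{\tau}$. If $v_i(t)=0$, then \eqref{bounvel} is immediate, since $R_V^0\geq 0$. Otherwise I set the unit vector $v:=v_i(t)/\lvert v_i(t)\rvert$, so that $\lvert v_i(t)\rvert=\langle v_i(t),v\rangle$. Applying the second inequality in \eqref{inpro} with $T=0$ (which is legitimate because $t\geq-\bar{\tau}=T-\bar{\tau}$) gives
$$\langle v_i(t),v\rangle\leq \max_{j=1,\dots,N}\max_{s\in[-\bar{\tau},0]}\langle v_j(s),v\rangle.$$
Finally, the Cauchy--Schwarz inequality together with $\lvert v\rvert=1$ yields $\langle v_j(s),v\rangle\leq \lvert v_j(s)\rvert$ for every $j=1,\dots,N$ and $s\in[-\bar{\tau},0]$, so the right-hand side is bounded above by $\max_{j=1,\dots,N}\max_{s\in[-\bar{\tau},0]}\lvert v_j(s)\rvert=R_V^0$. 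Chaining these inequalities yields $\lvert v_i(t)\rvert\leq R_V^0$, which is precisely \eqref{bounvel}.

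I do not expect any genuine obstacle here: the claim is essentially a corollary of Lemma~\ref{lemma1}, and the only two points deserving a word of care are the degenerate case $v_i(t)=0$ (needed so that the normalization defining $v$ makes sense) and the observation that the direction $v$ may be chosen to depend on the pair $(i,t)$, since \eqref{inpro} holds for every fixed $v\in\RR^d$. This is exactly the same device already exploited in the proof of Lemma~\ref{lemma2}, where a unit vector adapted to the maximizing pair was used to pass from a norm to an inner product.
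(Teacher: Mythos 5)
Your argument coincides with the paper's own proof: both handle the degenerate case $v_i(t)=0$ separately, then normalize $v=v_i(t)/\lvert v_i(t)\rvert$, apply the second inequality of \eqref{inpro} with $T=0$, and finish with Cauchy--Schwarz. The proposal is correct and takes essentially the same approach.
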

\begin{proof}
	Let $i=1,\dots,N$ and $t\geq-\bar{\tau}$. Note that, if $\lvert v_{i}(t)\rvert =0$, then trivially $R^{0}_{V}\geq 0=\lvert v_{i}(t)\rvert $. So, we can assume $\lvert v_{i}(t)\rvert >0$. We define $$v=\frac{v_{i}(t)}{\lvert v_{i}(t)\rvert}.$$
	Then, $v$ is a unit vector for which we can write
	$$\lvert v_{i}(t)\rvert=\langle v_{i}(t),v\rangle. $$
	Since $t\geq-\bar{\tau}$, we can apply \eqref{inpro} for $T=0$ and we get $$\lvert v_{i}(t)\rvert\leq \max_{j=1,\dots,N}\max_{s\in [-\bar{\tau},0]}\langle v_{j}(s),v\rangle\leq \max_{j=1,\dots,N}\max_{s\in [-\bar{\tau},0]}\lvert v_{j}(s)\rvert\lvert v\rvert=\max_{j=1,\dots,N}\max_{s\in [-\bar{\tau},0]}\lvert v_{j}(s)\rvert=R^{0}_{V}.$$
	Thus, \eqref{bounvel} is fullfilled.
\end{proof}
\begin{lem}
	For every $i,j=1,\dots,N$, we get
	\begin{equation}\label{dist}
	\lvert x_{i}(t-\tau(t))-x_{j}(t)\rvert\leq 2\bar{\tau}R^{0}_{V}+4M_{X}^{0}+d_{X}(t-\bar{\tau}), \quad\forall t\geq0,
	\end{equation}
	where
	$$M^{0}_{X}:=\max_{i=1,\dots,N}\,\,\max_{s\in [-\bar{\tau},0]}\lvert x_{i}(s)\rvert.$$
\end{lem}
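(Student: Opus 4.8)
The plan is to control $\lvert x_{i}(t-\tau(t))-x_{j}(t)\rvert$ by comparing both positions to their values at the common reference time $t-\bar{\tau}$. This is legitimate because $t\geq 0$ forces $t-\bar{\tau}\geq-\bar{\tau}$, so $d_{X}(t-\bar{\tau})$ is well defined. Concretely, I would insert $x_{i}(t-\bar{\tau})$ and $x_{j}(t-\bar{\tau})$ and apply the triangle inequality
$$\lvert x_{i}(t-\tau(t))-x_{j}(t)\rvert\leq \lvert x_{i}(t-\tau(t))-x_{i}(t-\bar{\tau})\rvert+\lvert x_{i}(t-\bar{\tau})-x_{j}(t-\bar{\tau})\rvert+\lvert x_{j}(t-\bar{\tau})-x_{j}(t)\rvert,$$
where the middle term is at most $d_{X}(t-\bar{\tau})$ by definition of the space diameter.

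The two outer terms are displacements of a single particle over a time interval of length at most $\bar{\tau}$: indeed $\lvert(t-\tau(t))-(t-\bar{\tau})\rvert=\bar{\tau}-\tau(t)\leq\bar{\tau}$ by \eqref{delay}, and $t-(t-\bar{\tau})=\bar{\tau}$. Hence I would first isolate the auxiliary estimate that for every particle $k$ and all $a,b\geq-\bar{\tau}$ with $\lvert a-b\rvert\leq\bar{\tau}$,
$$\lvert x_{k}(a)-x_{k}(b)\rvert\leq 2M_{X}^{0}+\bar{\tau}R_{V}^{0}.$$
To prove it, use that on $[0,\infty)$ the first line of \eqref{csp} gives $\frac{d}{dt}x_{k}=v_{k}$, so the displacement over any nonnegative subinterval is bounded by $R_{V}^{0}$ times its length thanks to \eqref{bounvel}, while for times in $[-\bar{\tau},0]$ the positions are prescribed initial data bounded in norm by $M_{X}^{0}$.

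The only delicate point, and the main (minor) obstacle, is that $t-\tau(t)$ or $t-\bar{\tau}$ may be negative when $0\leq t<\bar{\tau}$, so $\frac{d}{dt}x_{k}=v_{k}$ cannot be integrated across the origin. I would therefore split the interval between $a$ and $b$ at $0$: on the negative portion bound the two endpoints around $0$ by $\lvert x_{k}(\cdot)\rvert\leq M_{X}^{0}$, contributing at most $2M_{X}^{0}$, and on the nonnegative portion integrate $v_{k}$ to get at most $\bar{\tau}R_{V}^{0}$. When both $a,b\geq 0$ the initial-data term is absent and one even obtains the sharper bound $\bar{\tau}R_{V}^{0}$; when both are negative only the $2M_{X}^{0}$ term survives. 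In every case the auxiliary estimate holds.

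Applying this estimate to each of the two outer terms (with $a=t-\tau(t),\,b=t-\bar{\tau}$ and with $a=t-\bar{\tau},\,b=t$, both admissible since both times are $\geq t-\bar{\tau}\geq-\bar{\tau}$) and adding the middle term yields
$$\lvert x_{i}(t-\tau(t))-x_{j}(t)\rvert\leq (2M_{X}^{0}+\bar{\tau}R_{V}^{0})+d_{X}(t-\bar{\tau})+(2M_{X}^{0}+\bar{\tau}R_{V}^{0})=2\bar{\tau}R_{V}^{0}+4M_{X}^{0}+d_{X}(t-\bar{\tau}),$$
which is exactly \eqref{dist}. The factor $4M_{X}^{0}$ is thus seen to originate precisely from the two single-particle displacements that may straddle $t=0$; for $t\geq\bar{\tau}$ the argument in fact gives the stronger bound $2\bar{\tau}R_{V}^{0}+d_{X}(t-\bar{\tau})$, so the stated inequality holds uniformly for all $t\geq 0$.
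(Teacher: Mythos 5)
Your proposal is correct and follows essentially the same route as the paper: the identical three-term triangle inequality through $x_i(t-\bar{\tau})$ and $x_j(t-\bar{\tau})$, with the outer terms bounded by integrating $v_k$ (via \eqref{bounvel}) on nonnegative portions and by $2M_X^0$ across the initial interval. The only difference is cosmetic: you package the case analysis ($t>\bar{\tau}$ versus $t\leq\bar{\tau}$, and the sign of $t-\tau(t)$) into a single uniform single-particle displacement estimate, which is a slightly cleaner organization of the same argument.
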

\begin{proof}
	Given $i,j=1,\dots,N$ and $t\geq0$, we have
	\begin{equation}\label{split}
	\begin{split}
	\lvert x_{i}(t-\tau(t))-x_{j}(t)\rvert&\leq \lvert x_{i}(t-\tau(t))-x_{i}(t-\bar{\tau})\rvert+\lvert x_{i}(t-\bar{\tau})-x_{j}(t-\bar{\tau})\rvert+\lvert x_{j}(t-\bar{\tau})-x_{j}(t)\rvert\\&\leq \lvert x_{i}(t-\tau(t))-x_{i}(t-\bar{\tau})\rvert +d_{X}(t-\bar{\tau})+\lvert x_{j}(t-\bar{\tau})-x_{j}(t)\rvert.
	\end{split}
	\end{equation}
	Now, assume $t> \bar{\tau}$. Then both $t-\bar{\tau},t-\tau(t)>0$ and from inequalities \eqref{delay} and \eqref{bounvel} we get 
	$$\lvert x_{i}(t-\tau(t))-x_{i}(t-\bar{\tau})\rvert=\left\lvert \int_{t-\bar{\tau}}^{t-\tau(t)}v_{i}(s)\,ds\right\rvert\leq \int_{t-\bar{\tau}}^{t-\tau(t)}\lvert v_{i}(s)\rvert ds$$$$\hspace{2cm}\leq R_{V}^{0}(t-\tau(t)-t+\bar{\tau})\leq \bar{\tau}R_{V}^{0},$$
	and 
	$$\lvert x_{j}(t-\bar{\tau})-x_{j}(t)\rvert=\left\lvert -\int_{t-\bar{\tau}}^{t}v_{j}(s)\,ds\right\rvert\leq \int_{t-\bar{\tau}}^{t}\lvert v_{j}(s)\rvert ds\leq \bar{\tau}R_{V}^{0}.$$
	Thus, \eqref{split} becomes $$\lvert x_{i}(t-\tau(t))-x_{j}(t)\rvert\leq 2\bar{\tau}R_{V}^{0}+d_{X}(t-\bar{\tau}).$$
	On the contrary, assume that $t\leq \bar{\tau}$. Then $t-\bar{\tau}\leq 0$ and from \eqref{delay} and \eqref{bounvel} we get
	$$\lvert x_{j}(t-\bar{\tau})-x_{j}(t)\rvert=\left\lvert x_{j}(t-\bar{\tau})-x_{j}(0)-\int_{0}^{t}v_{j}(s)\,ds\right\rvert$$$$\hspace{3cm}\leq \lvert x_{j}(t-\bar{\tau})-x_{j}(0)\rvert+\int_{0}^{t}\lvert v_{j}(s)\rvert ds$$$$\hspace{2.3cm}\leq 2M_{X}^{0}+tR_{V}^{0}\leq 2M_{X}^{0}+\bar{\tau}R_{V}^{0}.$$
	Note that our assumption, $t\leq\bar{\tau}$, does not imply that $t-\tau(t)\leq 0$. So we can distinguish two cases. \\If $t-\tau(t)>0$, then $$\lvert x_{i}(t-\tau(t))-x_{i}(t-\bar{\tau})\rvert=\left\lvert x_{i}(0)+\int_{0}^{t-\tau(t)}v_{i}(s)\,ds-x_{i}(t-\bar{\tau})\right\rvert$$$$\leq\lvert x_{i}(0)-x_{i}(t-\bar{\tau})\rvert +\int_{0}^{t-\tau(t)}\lvert v_{i}(s)\rvert ds$$$$\leq 2M_{X}^{0}+(t-\tau(t))R_{V}^{0}\leq 2M_{X}^{0}+tR_{V}^{0}\leq 2M_{X}^{0}+\bar{\tau}R_{V}^{0},$$
	and \eqref{split} becomes $$\lvert x_{i}(t-\tau(t))-x_{j}(t)\rvert\leq 4M_{X}^{0}+2\bar{\tau}R_{V}^{0}+d_{X}(t-\bar{\tau}).$$
	On the other hand, if $t-\tau(t)\leq 0$, we have 
	$$\lvert x_{i}(t-\tau(t))-x_{i}(t-\bar{\tau})\rvert\leq 2M_{X}^{0},$$
	and we can write 
	$$\lvert x_{i}(t-\tau(t))-x_{j}(t)\rvert\leq 4M_{X}^{0}+\bar{\tau}R_{V}^{0}+d_{X}(t-\bar{\tau}).$$
	We have so proved that, in all cases, $$\lvert x_{i}(t-\tau(t))-x_{j}(t)\rvert\leq 4M_{X}^{0}+2\bar{\tau}R_{V}^{0}+d_{X}(t-\bar{\tau}),$$
	which proves \eqref{dist}.
\end{proof}
In the following, given $t\geq -\bar{\tau}$, $i,j=1,\dots,N$ and a vector $v\in \RR^{d}$, we shall denote with
$$d_{V}^{(ij)_{v}}(t):=\langle v_{i}(t)-v_{j}(t),v\rangle.$$
The next lemma is analogous to Lemma 3.4 in \cite{Cartabia}. We give the proof for the reader's convenience.
\begin{lem}\label{disug}
	For all $i,j=1,\dots,N$, unit vector $v\in \RR^{d}$ and $n\in \mathbb{N}_{0}$, we have that 
	\begin{equation}\label{dvij}
		d_{V}^{(ij)_{v}}(t)\leq e^{-K(t-t_{0})}d_{V}^{(ij)_{v}}(t_{0})+(1-e^{-K(t-t_{0})})D_{n},
	\end{equation}
	for all $t\geq t_{0}\geq n\bar{\tau}$. Moreover, for each $n\in \mathbb{N}_{0}$ it holds
	\begin{equation}\label{n+1}
		D_{n+1}\leq e^{-K\bar{\tau}}d_{V}(n\bar{\tau})+(1-e^{-K\bar{\tau}})D_{n}.
	\end{equation}
\end{lem}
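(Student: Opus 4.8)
The plan is to reduce both statements to one-sided differential inequalities for the scalar projections, exactly in the spirit of Lemma \ref{lemma1}, and then integrate via Gronwall. Fix a unit vector $v$ and abbreviate $\omega_{l}(t):=\langle v_{l}(t),v\rangle$. Put $M^{+}:=\max_{l}\max_{s\in[n\bar{\tau}-\bar{\tau},n\bar{\tau}]}\omega_{l}(s)$ and $M^{-}:=\min_{l}\min_{s\in[n\bar{\tau}-\bar{\tau},n\bar{\tau}]}\omega_{l}(s)$; arguing as in the proof of Lemma \ref{lemma2} (Cauchy--Schwarz with $\lvert v\rvert=1$) gives $M^{+}-M^{-}\leq D_{n}$.

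First I would establish two one-sided bounds on each agent. For $t\geq n\bar{\tau}$ one has $t-\tau(t)\geq t-\bar{\tau}\geq n\bar{\tau}-\bar{\tau}$, so \eqref{inpro} with $T=n\bar{\tau}$ gives $\omega_{k}(t-\tau(t))\leq M^{+}$ and $\omega_{i}(t)\leq M^{+}$, whence $M^{+}-\omega_{i}(t)\geq 0$. Differentiating $\omega_{i}$ along \eqref{csp} and using $a_{ik}(t)\geq 0$ together with $\sum_{k\neq i}a_{ik}(t)\leq K$, the sign of $M^{+}-\omega_{i}(t)$ yields $\frac{d}{dt}\omega_{i}(t)\leq K(M^{+}-\omega_{i}(t))$; the symmetric computation with $M^{-}$ (where now $M^{-}-\omega_{i}(t)\leq 0$, so multiplying by the sub-$K$ weight sum enlarges it) gives $\frac{d}{dt}\omega_{i}(t)\geq K(M^{-}-\omega_{i}(t))$, for all $t\geq n\bar{\tau}$.

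For \eqref{dvij} I would subtract the upper bound for $\omega_{i}$ from the lower bound for $\omega_{j}$: writing $w(t):=d_{V}^{(ij)_{v}}(t)=\omega_{i}(t)-\omega_{j}(t)$, these combine into $w'(t)\leq K(M^{+}-M^{-})-Kw(t)\leq KD_{n}-Kw(t)$ for $t\geq n\bar{\tau}$. Gronwall's inequality on $[t_{0},t]$ with $t\geq t_{0}\geq n\bar{\tau}$, together with $K\int_{t_{0}}^{t}e^{-K(t-s)}\,ds=1-e^{-K(t-t_{0})}$, produces \eqref{dvij} directly.

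The delicate point is \eqref{n+1}, since $D_{n+1}$ compares velocities at two \emph{possibly different} times $t_{1},t_{2}\in[n\bar{\tau},(n+1)\bar{\tau}]$, so \eqref{dvij} — which fixes a single time — does not apply verbatim; this is the main obstacle. I would instead integrate the two one-sided bounds separately from $n\bar{\tau}$, obtaining $\omega_{l}(t)\leq e^{-K(t-n\bar{\tau})}\omega_{l}(n\bar{\tau})+(1-e^{-K(t-n\bar{\tau})})M^{+}$ and the analogous lower estimate with $M^{-}$. Assuming $D_{n+1}>0$, choose the maximizers and the associated unit vector $v$, so that $D_{n+1}=\max_{l,t}\omega_{l}(t)-\min_{l,t}\omega_{l}(t)$ over $[n\bar{\tau},(n+1)\bar{\tau}]$. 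Setting $\mu^{+}:=\max_{m}\omega_{m}(n\bar{\tau})$ and $\mu^{-}:=\min_{m}\omega_{m}(n\bar{\tau})$, and using $\mu^{+}\leq M^{+}$, $\mu^{-}\geq M^{-}$, the right-hand sides become monotone in $t$, so their extrema over $[n\bar{\tau},(n+1)\bar{\tau}]$ are attained at $t=(n+1)\bar{\tau}$, giving $\max_{l,t}\omega_{l}(t)\leq e^{-K\bar{\tau}}\mu^{+}+(1-e^{-K\bar{\tau}})M^{+}$ and $\min_{l,t}\omega_{l}(t)\geq e^{-K\bar{\tau}}\mu^{-}+(1-e^{-K\bar{\tau}})M^{-}$. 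Subtracting and invoking $\mu^{+}-\mu^{-}\leq d_{V}(n\bar{\tau})$ (again Cauchy--Schwarz with $\lvert v\rvert=1$) and $M^{+}-M^{-}\leq D_{n}$ yields \eqref{n+1}, the trivial case $D_{n+1}=0$ being immediate.
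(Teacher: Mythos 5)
Your proof is correct and follows essentially the same route as the paper: project onto a unit vector, derive the one-sided differential inequalities $\frac{d}{dt}\langle v_{i}(t),v\rangle\leq K(M^{+}-\langle v_{i}(t),v\rangle)$ and its counterpart with $M^{-}$ from the boundedness of $\psi$ and Lemma \ref{lemma1}, integrate by Gronwall, and use the sign/monotonicity observation to replace $e^{-K(t_{k}-n\bar{\tau})}$ by $e^{-K\bar{\tau}}$ so that the two different times $t_{1},t_{2}$ can be compared at the common reference time $n\bar{\tau}$. The only cosmetic differences are that the paper gets the lower estimate by applying the upper one to $-v$ and integrates each projection before subtracting, rather than subtracting the differential inequalities first.
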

\begin{proof}
	Given $n\in \mathbb{N}_{0}$, for each $v\in \RR^{d}$ unit vector, let denote with $$M=\max_{l=1,\dots,N}\max_{s\in [n\bar{\tau}-\bar{\tau},n\bar{\tau}]}\langle v_{l}(s),v\rangle,$$
	$$m=\min_{l=1,\dots,N}\min_{s\in [n\bar{\tau}-\bar{\tau},n\bar{\tau}]}\langle v_{l}(s),v\rangle.$$
	Then $M-m\leq D_{n}$. We claim that, for all $i,j=1,\dots,N$, $t\geq t_{0}\geq n\bar{\tau}$,
	\begin{equation}\label{Mm}
		\begin{split}
		&\langle v_{i}(t),v\rangle\leq e^{-K(t-t_{0})}\langle v_{i}(t_{0}),v\rangle+(1-e^{-K(t-t_{0})})M,\\&\langle v_{j}(t),v\rangle\geq e^{-K(t-t_{0})}\langle v_{j}(t_{0}),v\rangle+(1-e^{-K(t-t_{0})})m.
		\end{split}
	\end{equation}
	So, fix $i,j=1,\dots,N$ and $t\geq t_{0}\geq n\bar{\tau}$. Then, being $t\geq 0$, we have
	\begin{equation}\label{derva}
		\begin{split}
		\frac{d}{dt}\langle v_{i}(t),v\rangle&=\sum_{l:l\neq i}a_{il}(t)\langle v_{l}(t-\tau(t))-v_{i}(t),v\rangle\\&=\sum_{l:l\neq i}a_{il}(t)(\langle v_{l}(t-\tau(t)),v\rangle-\langle v_{i}(t),v\rangle)
		\end{split}
	\end{equation}
	We recall that $a_{il}(t)=\frac{1}{N-1}\psi(\lvert x_{i}(t)-x_{l}(t-\tau(t))\rvert)$. Thus, being $\psi$ a bounded function, we can write $a_{il}(t)\leq \frac{K}{N-1}$. Furthermore, $t\geq n\bar{\tau}$, which implies that $t-\tau(t)\geq n\bar{\tau}-\bar{\tau}$. Then, by virtue of \eqref{inpro}, we have that $$m\leq\langle v_{k}(t-\tau(t)),v\rangle\leq M,\quad m\leq\langle v_{k}(t),v\rangle\leq M, \quad \forall k=1,\dots,N.$$ So, combining all these facts, \eqref{derva} becomes
	$$\frac{d}{dt}\langle v_{i}(t),v\rangle= \sum_{l:l\neq i}a_{il}(t)(\langle v_{l}(t-\tau(t)),v\rangle-M+M-\langle v_{i}(t),v\rangle)$$$$\hspace{1cm}\leq\sum_{l:l\neq i}a_{il}(t)(M-\langle v_{i}(t),v\rangle)$$$$\hspace{1.4cm}\leq\frac{K}{N-1}\sum_{l:l\neq i}(M-\langle v_{i}(t),v\rangle)$$$$ =K(M-\langle v_{i}(t),v\rangle).$$
	Then, from the Gronwall's inequality with $t\geq t_{0}$ we get
	$$\begin{array}{l}
	\vspace{0.2cm}\displaystyle{\langle  v_{i}(t),v\rangle\leq e^{-\int_{t_{0}}^{t}Kds}\langle v_{i}(t_{0}),v\rangle+\int_{t_{0}}^{t}KMe^{-({\int_{t_{0}}^{t}Kdv-\int_{t_{0}}^{s}}Kdv)}ds}\\
	\vspace{0.3cm}\displaystyle{\hspace{1.5cm}=e^{-K(t-t_{0})}\langle v_{i}(t_{0}),v\rangle+M e^{-K(t-t_{0})}(e^{K(t-t_{0})}-1)}\\
	\displaystyle{\hspace{1.5cm}=e^{-K(t-t_{0})}\langle v_{i}(t_{0}),v\rangle+(1-e^{-K(t-t_{0})})M.}
	\end{array}$$
	Hence, it holds \begin{equation}\label{prodM}
		\langle v_{i}(t),v\rangle\leq e^{-K(t-t_{0})}\langle v_{i}(t_{0}),v\rangle+(1-e^{-K(t-t_{0})})M,
	\end{equation}
	for every $i=1,\dots,N$, $t\geq t_{0}\geq n\bar{\tau}$ and unit vector $v\in \RR^{d}$, which proves the first inequality in \eqref{Mm}. 
	\\Now, to prove the second inequality in \eqref{Mm}, let $j=1,\dots,N$, $t\geq t_{0}\geq n\bar{\tau}$ and a unit vector $v\in \RR^{d}$. Then, we can apply \eqref{prodM} to the unit vector $-v\in \RR^{d}$ and we get $$\langle v_{j}(t),-v\rangle\leq e^{-K(t-t_{0})}\langle v_{j}(t_{0}),-v\rangle+(1-e^{-K(t-t_{0})})\left(\max_{l=1,\dots,N}\max_{s\in [n\bar{\tau}-\bar{\tau},n\bar{\tau}]}\langle v_{l}(s),-v\rangle\right),$$
	from which $$\langle v_{j}(t),v\rangle \geq e^{-K(t-t_{0})}\langle v_{j}(t_{0}), v\rangle+(1-e^{-K(t-t_{0})})\left(-\max_{l=1,\dots,N}\max_{s\in [n\bar{\tau}-\bar{\tau},n\bar{\tau}]}\langle v_{l}(s),-v\rangle\right)$$
	$$=e^{-K(t-t_{0})}\langle v_{j}(t_{0}), v\rangle+(1-e^{-K(t-t_{0})})m.$$
	Therefore \eqref{Mm} holds true. 
	\\Now, from \eqref{Mm}, for each $i,j=1,\dots,N$, $v\in \RR^{d}$ unit vector and $t\geq t_{0}\geq n\bar{\tau}$, we have
	$$\begin{array}{l}
	\vspace{0.3cm}\displaystyle{d_{V}^{(ij)_{v}}(t)=\langle v_{i}(t)-v_{j}(t),v\rangle=\langle v_{i}(t)(t),v\rangle-\langle v_{j}(t),v\rangle}\\
	\vspace{0.3cm}\hspace{0.5cm}\displaystyle{\leq e^{-K(t-t_{0})}\langle v_{i}(t_{0}),v\rangle+(1-e^{-K(t-t_{0})})M-e^{-K(t-t_{0})}\langle v_j(t_{0}), v\rangle-(1-e^{-K(t-t_{0})})m}\\
	\vspace{0.3cm}\hspace{0.5cm}\displaystyle{=e^{-K(t-t_{0})}\langle v_{i}(t_{0})-v_{j}(t_{0}),v\rangle+(1-e^{-K(t-t_{0})})(M-m)}\\
	\hspace{0.5cm}\displaystyle{=e^{-K(t-t_{0})}d_{V}^{(ij)_{v}}(t_{0})+(1-e^{-K(t-t_{0})})(M-m).}
	\end{array}$$
	Then, by recalling that $M-m\leq D_{n}$, we finally get
	$$d_{V}^{(ij)_{v}}(t)\leq e^{-K(t-t_{0})}d_{V}^{(ij)_{v}}(t_{0})+(1-e^{-K(t-t_{0})})D_{n},$$
	which proves \eqref{dvij}.
	\\Finally, we prove \eqref{n+1}. Let $i,j=1,\dots,N$ and $t_{1},t_{2}\in [n\bar{\tau},n\bar{\tau}+\bar{\tau}]$ be such that
	$$D_{n+1}=\lvert v_{i}(t_{1})-v_{j}(t_{2})\rvert.$$
	Note that, if $D_{n+1}=0$, then trivially $$e^{-K\bar{\tau}}d_{v}(n\bar{\tau})+(1-e^{-K\bar{\tau}})D_{n}\geq 0=D_{n+1}.$$
	So we can assume $D_{n+1}>0$ and we define the unit vector $$v=\frac{v_{i}(t_{1})-v_{j}(t_{2})}{\lvert v_{i}(t_{1})-v_{j}(t_{2})\rvert}.$$
	By applying \eqref{Mm} with $t_{0}=n\bar{\tau}\leq t_{1},t_{2}$, we get
	$$\begin{array}{l}
		\vspace{0.3cm}\displaystyle{\langle v_{i}(t_{1}),v\rangle\leq e^{-K(t_{1}-n\bar{\tau})}\langle v_{i}(n\bar{\tau}),v\rangle+(1-e^{-K(t_{1}-n\bar{\tau})})M}\\
		\vspace{0.3cm}\displaystyle{\hspace{1.7cm}=e^{-K(t_{1}-n\bar{\tau})}(\langle v_{i}(n\bar{\tau}),v\rangle-M)+M}\\
		\vspace{0.3cm}\displaystyle{\hspace{1.7cm}\leq e^{-K\bar{\tau}}(\langle v_{i}(n\bar{\tau}),v\rangle-M)+M}\\
		\displaystyle{\hspace{1.7cm}=e^{-K\bar{\tau}}\langle v_{i}(n\bar{\tau}),v\rangle+(1-e^{-K\bar{\tau}})M,}
	\end{array}$$
	where we used the fact that $t_{1}\leq n\bar{\tau}+\bar{\tau}$ and $\langle v_{i}(n\bar{\tau}),v\rangle-M\leq0$, and
	$$\begin{array}{l}
		\vspace{0.3cm}\displaystyle{\langle v_{j}(t_{2}),v\rangle\geq e^{-K(t_{2}-n\bar{\tau})}\langle v_{j}(n\bar{\tau}),v\rangle+(1-e^{-K(t_{2}-n\bar{\tau})})m}\\
		\vspace{0.3cm}\displaystyle{\hspace{1.7cm}=e^{-K(t_{2}-n\bar{\tau})}(\langle v_{j}(n\bar{\tau}),v\rangle-m)+m}\\
		\vspace{0.3cm}\displaystyle{\hspace{1.7cm}\geq e^{-K\bar{\tau}}(\langle v_{j}(n\bar{\tau}),v\rangle-m)+m}\\
		\displaystyle{\hspace{1.7cm}=e^{-K\bar{\tau}}\langle v_{j}(n\bar{\tau}),v\rangle+(1-e^{-K\bar{\tau}})m,}
	\end{array}$$
	where we used the fact that $t_{2}\leq n\bar{\tau}+\bar{\tau}$ and $\langle v_{j}(n\bar{\tau}),v\rangle-m\geq0$.
	\\As a consequence, it holds 
	$$\begin{array}{l}
	\vspace{0.3cm}\displaystyle{D_{n+1}=\langle v_{i}(t_{1})-v_{j}(t_{2}),v\rangle=\langle v_{i}(t_{1}),v\rangle-\langle v_{j}(t_{2}),v\rangle}\\
	\vspace{0.3cm}\displaystyle{\hspace{0.9cm}\leq e^{-K\bar{\tau}}\langle v_{i}(n\bar{\tau}),v\rangle+(1-e^{-K\bar{\tau}})M -e^{-K\bar{\tau}}\langle v_{j}(n\bar{\tau}),v\rangle-(1-e^{-K\bar{\tau}})m}\\
	\vspace{0.3cm}\displaystyle{\hspace{0.9cm}=e^{-K\bar{\tau}}\langle v_{i}(n\bar{\tau})-v_{j}(n\bar{\tau}),v\rangle+(1-e^{-K\bar{\tau}})(M-m)}\\
	\vspace{0.3cm}\displaystyle{\hspace{0.9cm}\leq e^{-K\bar{\tau}}\lvert v_{i}(n\bar{\tau})-v_{j}(n\bar{\tau})\rvert\lvert v\rvert+(1-e^{-K\bar{\tau}})(M-m)}\\
	\displaystyle{\hspace{0.9cm}\leq e^{-K\bar{\tau}}d_{V}(n\bar{\tau})+(1-e^{-K\bar{\tau}})D_{n},}
	\end{array}$$
	which concludes our proof.
\end{proof}
Now, we give the following definition.
\begin{defn}
	We define
	$$\phi(t):=\min\left\{e^{-K\bar{\tau}}\psi_{t},\frac{e^{-2K\bar{\tau}}}{\bar{\tau}}\right\},$$
	where
	$$\psi_{t}=\min\left\{\psi(r):r\in \left[0,2\bar{\tau}R_{V}^{0}+4M_{X}^{0}+\max_{s\in[-\bar{\tau},t] }d_{X}(s)\right]\right\},$$
	for all $t\geq -\bar{\tau}$.
\end{defn}
By definition, being $\psi$ a positive function, we have that $\psi_{t}>0$, for all $t\geq -\bar{\tau}$. Thus, the function $\phi$ is positive too.
\begin{oss}
	Let us note that from estimate \eqref{dist}, for all $t\geq 0$ and $i,j=1,\dots,N$, it holds that
	$$\psi(\lvert x_{i}(t)-x_{j}(t-\tau(t))\rvert)\geq \psi_{t-\bar{\tau}},$$
	from which
	\begin{equation}\label{lowerbound}
		\psi(\lvert x_{i}(t)-x_{j}(t-\tau(t))\rvert)\geq e^{K\bar{\tau}}\phi(t-\bar{\tau}).
	\end{equation}
\end{oss}
\begin{lem}\label{lemma3}
	For each integer $n\geq 2$, we have that
	\begin{equation}\label{n-2}
		D_{n+1}\leq \left(1-e^{-K\bar{\tau}}\int_{n\bar{\tau}-2\bar{\tau}}^{n\bar{\tau}-\bar{\tau}}\phi(s)ds\right)D_{n-2}.
	\end{equation}
\end{lem}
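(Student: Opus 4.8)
The plan is to combine the one–step recursion \eqref{n+1} with the monotonicity \eqref{dec} of Lemma \ref{lemma2}, so that the whole statement is reduced to a single quantitative contraction estimate for $d_{V}(n\bar\tau)$. Indeed, since $D_{n}\le D_{n-1}\le D_{n-2}$ by \eqref{dec}, inequality \eqref{n+1} gives
$$D_{n+1}\le e^{-K\bar\tau}d_{V}(n\bar\tau)+(1-e^{-K\bar\tau})D_{n-2},$$
and a direct computation shows that \eqref{n-2} follows once we establish
$$d_{V}(n\bar\tau)\le\Bigl(1-\int_{n\bar\tau-2\bar\tau}^{n\bar\tau-\bar\tau}\phi(s)\,ds\Bigr)D_{n-2}.$$
Thus the real work is to show that the velocity diameter at the single instant $n\bar\tau$ is strictly smaller than the bulk spread $D_{n-2}$, with a deficit measured precisely by $\int_{n\bar\tau-2\bar\tau}^{n\bar\tau-\bar\tau}\phi$.

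To obtain this, I would fix a maximising pair at time $n\bar\tau$: choose $i,j$ with $d_{V}(n\bar\tau)=\lvert v_{i}(n\bar\tau)-v_{j}(n\bar\tau)\rvert$ (the case $d_{V}(n\bar\tau)=0$ being trivial) and set $v=(v_{i}(n\bar\tau)-v_{j}(n\bar\tau))/\lvert v_{i}(n\bar\tau)-v_{j}(n\bar\tau)\rvert$, so that $\langle v_{i}(n\bar\tau),v\rangle$ and $\langle v_{j}(n\bar\tau),v\rangle$ are, respectively, the maximal and minimal projections along $v$ at that time. Since $n\ge 2$, Lemma \ref{lemma1} applies with $T=(n-2)\bar\tau\ge 0$ and yields the uniform bulk bounds $m\le\langle v_{l}(t),v\rangle\le M$ for every $l$ and every $t\ge (n-3)\bar\tau$, where $M$ and $m$ denote the maximum and minimum of $\langle v_{l}(s),v\rangle$ over $s\in[(n-3)\bar\tau,(n-2)\bar\tau]$ and $M-m\le D_{n-2}$. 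In particular these bounds control all the delayed projections $\langle v_{l}(t-\tau(t)),v\rangle$ entering the equations for $t\in[(n-1)\bar\tau,n\bar\tau]$, since there $t-\tau(t)\ge(n-2)\bar\tau\ge(n-3)\bar\tau$.

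The contraction itself would then be extracted by integrating the differential inequalities for $\langle v_{i}(t),v\rangle$ and $\langle v_{j}(t),v\rangle$ over the last interval $[(n-1)\bar\tau,n\bar\tau]$, but now retaining the \emph{lower} bound on the interaction coefficients instead of only the upper bound $K$ used in Lemma \ref{disug}. By \eqref{lowerbound} one has $\sum_{l\ne i}a_{il}(t)\ge e^{K\bar\tau}\phi(t-\bar\tau)$, and the crucial point is that the factor $1/(N-1)$ in each weight is cancelled by summing over the $N-1$ neighbours, which is exactly what keeps the resulting rate independent of $N$. Integrating this improved rate over $t\in[(n-1)\bar\tau,n\bar\tau]$ produces, after the substitution $s=t-\bar\tau$, the deficit $\int_{(n-2)\bar\tau}^{(n-1)\bar\tau}\phi(s)\,ds$ of the statement, while the exponential prefactors combine with the $e^{K\bar\tau}$ of \eqref{lowerbound} to recover the stated coefficient. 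Adding the upper estimate for $\langle v_{i}(n\bar\tau),v\rangle$ to the lower estimate for $\langle v_{j}(n\bar\tau),v\rangle$ then bounds $d_{V}(n\bar\tau)=\langle v_{i}(n\bar\tau)-v_{j}(n\bar\tau),v\rangle$ by $\bigl(1-\int\phi\bigr)D_{n-2}$, as required.

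I expect the main obstacle to be precisely the bookkeeping of the pull toward the opposite extreme in the presence of the variable delay. In the undelayed setting the maximal particle is pulled toward the \emph{current} minimal one, but here the projections that enter the dynamics are evaluated at the retarded times $t-\tau(t)$, and the particle that is extremal at $n\bar\tau$ need not be extremal at those retarded times. The purpose of the two–step gap (comparing $D_{n+1}$ with $D_{n-2}$ rather than $D_{n}$) is to grant the system a full delay length of room, so that the low, resp.\ high, values guaranteed on $[(n-3)\bar\tau,(n-2)\bar\tau]$ by Lemma \ref{lemma1} feed, through the delayed coupling, into the interval $[(n-1)\bar\tau,n\bar\tau]$ where the gain is collected. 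Making this coordination quantitative—producing an effective rate honestly controlled by $\phi$ through the full neighbour sum, and not merely by a single weight $a_{ij}\sim 1/(N-1)$—is the delicate point; it is also where the cap $\phi(t)\le e^{-2K\bar\tau}/\bar\tau$ built into the definition of $\phi$ is used, so as to keep the contraction factor $1-e^{-K\bar\tau}\int\phi$ within $[0,1)$.
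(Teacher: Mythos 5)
Your reduction is exactly the paper's: combine \eqref{n+1} with $D_{n}\le D_{n-2}$ so that everything rests on the single-time estimate $d_{V}(n\bar{\tau})\le\bigl(1-\int_{n\bar{\tau}-2\bar{\tau}}^{n\bar{\tau}-\bar{\tau}}\phi(s)\,ds\bigr)D_{n-2}$, and your setup (maximising pair, unit vector $v$, bulk bounds $m\le\langle v_{l}(t),v\rangle\le M$ from Lemma \ref{lemma1}, lower bound \eqref{lowerbound} on the weights, Gronwall on the last interval) is also the paper's. But the step you yourself flag as ``the delicate point'' is precisely the one you have not supplied, and it does not go through as sketched. When you add the equations for $\langle v_{i},v\rangle$ and $\langle v_{j},v\rangle$ and pair the neighbour sums, only the $N-2$ indices $l\neq i,j$ produce the clean cancellation $(\langle v_{l}(t-\tau(t)),v\rangle-M)+(m-\langle v_{l}(t-\tau(t)),v\rangle)=m-M$. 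The two leftover cross terms combine to $\frac{e^{K\bar{\tau}}\phi(t-\bar{\tau})}{N-1}\bigl(m-M-d_{V}^{(ij)_{v}}(t-\tau(t))\bigr)$, and this is $\le\frac{e^{K\bar{\tau}}\phi(t-\bar{\tau})}{N-1}(m-M)$ only if $d_{V}^{(ij)_{v}}(t-\tau(t))\ge 0$. If the projected difference is negative at some retarded time, the best you can say about the cross terms is that they are $\le 0$, the effective rate degrades to $\frac{N-2}{N-1}e^{K\bar{\tau}}\phi$, and for $N=2$ you get no guaranteed contraction at all --- so the claim that summing over the $N-1$ neighbours automatically cancels the $1/(N-1)$ is not correct for the extremal pair itself.

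The paper closes this with a dichotomy that is missing from your proposal. Either $d_{V}^{(ij)_{v}}(t_{0})<0$ for some $t_{0}\in[n\bar{\tau}-2\bar{\tau},n\bar{\tau}]$, in which case \eqref{dvij} alone already yields $d_{V}^{(ij)_{v}}(n\bar{\tau})<(1-e^{-2K\bar{\tau}})D_{n-2}\le\bigl(1-\int_{n\bar{\tau}-2\bar{\tau}}^{n\bar{\tau}-\bar{\tau}}\phi(s)\,ds\bigr)D_{n-2}$ (this is where the cap $\phi\le e^{-2K\bar{\tau}}/\bar{\tau}$ is actually used, not merely to keep the factor in $[0,1)$); or $d_{V}^{(ij)_{v}}\ge 0$ on the whole window $[n\bar{\tau}-2\bar{\tau},n\bar{\tau}]$, in which case the retarded cross term has the right sign and the full-rate Gronwall computation you describe goes through for the difference $d_{V}^{(ij)_{v}}$ on $[n\bar{\tau}-\bar{\tau},n\bar{\tau}]$. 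Without this case distinction the argument has a genuine hole exactly at the point you identified; with it, your outline coincides with the paper's proof (up to the harmless variant of anchoring $M,m$ on $[(n-3)\bar{\tau},(n-2)\bar{\tau}]$ rather than $[(n-2)\bar{\tau},(n-1)\bar{\tau}]$ followed by $D_{n-1}\le D_{n-2}$).
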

\begin{proof}
	We first show that, for each $n\geq2$,
	\begin{equation}\label{dv}
		d_{V}(n\bar{\tau})\leq \left(1-\int_{n\bar{\tau}-2\bar{\tau}}^{n\bar{\tau}-\bar{\tau}}\phi(s)ds\right)D_{n-2}.
	\end{equation}
	To this aim, let $n\geq2$. Note that, if $d_{V}(n\bar{\tau})=0$, by definition of $\phi$ we have that
	$$\left(1-\int_{n\bar{\tau}-2\bar{\tau}}^{n\bar{\tau}-\bar{\tau}}\phi(s)ds\right)D_{n-2}\geq \left(1-\int_{n\bar{\tau}-2\bar{\tau}}^{n\bar{\tau}-\bar{\tau}}\frac{e^{-2K\bar{\tau}}}{\bar{\tau}}ds\right)D_{n-2}$$
	$$=\left(1-\frac{e^{-2K\bar{\tau}}}{\bar{\tau}}(n\bar{\tau}-\bar{\tau}-n\bar{\tau}+2\bar{\tau})\right)D_{n-2}$$$$=\left(1-e^{-2K\bar{\tau}}\right)D_{n-2}\geq 0=d_{V}(n\bar{\tau}).$$
	So we can assume $d_{V}(n\bar{\tau})>0$. Moreover, let $i,j=1,\dots,N$ be such that $$d_{V}(n\bar{\tau})=\lvert v_{i}(n\bar{\tau})-v_{j}(n\bar{\tau})\rvert.$$
	We set $$v=\frac{v_{i}(n\bar{\tau})-v_{j}(n\bar{\tau})}{\lvert v_{i}(n\bar{\tau})-v_{j}(n\bar{\tau})\rvert}.$$
	Then $v$ is a unit vector for which we can write $$d_{V}(n\bar{\tau})=\langle v_{i}(n\bar{\tau})-v_{j}(n\bar{\tau}),v\rangle=d_{V}^{(ij)_{v}}(n\bar{\tau}).$$
	At this point, we distinguish two cases.
	\\\par\textit{Case I.} Assume that there exists $t_{0}\in [n\bar{\tau}-2\bar{\tau},n\bar{\tau}]$ such that $d_{V}^{(ij)_{v}}(t_{0})<0$. Note that
	$$\left(1-\int_{n\bar{\tau}-2\bar{\tau}}^{n\bar{\tau}-\bar{\tau}}\phi(s)ds\right)\geq 1-e^{-2K\bar{\tau}}.$$
	Then, by using \eqref{dvij} with $n\bar{\tau}\geq t_{0}\geq n\bar{\tau}-2\bar{\tau}$, we have 
	$$d_{V}^{(ij)_{v}}(n\bar{\tau})\leq e^{-K(n\bar{\tau}-t_{0})}d_{V}^{(ij)_{v}}(t_{0})+(1-e^{-K(n\bar{\tau}-t_{0})})D_{n-2}$$$$<(1-e^{-K(n\bar{\tau}-t_{0})})D_{n-2}\leq (1-e^{-2K\bar{\tau}})D_{n-2}\leq \left(1-\int_{n\bar{\tau}-2\bar{\tau}}^{n\bar{\tau}-\bar{\tau}}\phi(s)ds\right)D_{n-2}.$$
	\\\par \textit{Case II.}  Assume that $d_{V}^{(ij)_{v}}(t)\geq0$, for every $t\in [n\bar{\tau}-2\bar{\tau},n\bar{\tau}]$. We set
	$$M=\max_{l=1,\dots,N}\max_{s\in [n\bar{\tau}-2\bar{\tau},n\bar{\tau}-\bar{\tau}]}\langle v_{l}(s),v\rangle,$$
	$$m=\min_{l=1,\dots,N}\min_{s\in [n\bar{\tau}-2\bar{\tau},n\bar{\tau}-\bar{\tau}]}\langle v_{l}(s),v\rangle.$$
	Then, $M-m\leq D_{n-1}$. Notice that, from \eqref{lowerbound}, for each $l,k=1,\dots,N$ and $t\geq 0$,  
	\begin{equation}\label{pesi1}
		a_{lk}(t)\geq \frac{e^{K\bar{\tau}}\phi(t-\bar{\tau})}{N-1}.
	\end{equation}
	Thus, for every $t\in [n\bar{\tau}-\bar{\tau},n\bar{\tau}]$, it comes that
	$$\begin{array}{l}
	\displaystyle{\frac{d}{dt}d_{V}^{(ij)_{v}}(t)=\sum_{l:l\neq i}a_{il}(t)\langle v_{l}(t-\tau(t))-v_{i}(t),v\rangle+\sum_{l:l\neq i}a_{jl}(t)\langle v_{j}(t)-v_{l}(t-\tau(t)),v\rangle}\\
	\displaystyle{\hspace{1.8cm}=\sum_{l:l\neq i}a_{il}(t)(\langle v_{l}(t-\tau(t)),v\rangle-M+M-\langle v_{i}(t),v\rangle)}\\
	\vspace{0.3cm}\displaystyle{\hspace{2.1cm}+\sum_{l:l\neq i}a_{jl}(t)(\langle v_{j}(t),v\rangle-m+m-\langle v_{l}(t-\tau(t)),v\rangle)}\\
	\displaystyle{\hspace{6 cm}:=S_{1}+S_{2}.}
	\end{array}$$
	We recall that $\psi$ is bounded and that, from \eqref{inpro}, $$m\leq \langle v_{k}(s),v\rangle\leq M,\quad \forall s\geq n\bar{\tau}-2\bar{\tau},\forall k=1,\dots,N.$$ Combining these facts with \eqref{pesi1}, for every $t\in [n\bar{\tau}-\bar{\tau},n\bar{\tau}]$, it holds that $t,t-\tau(t)\geq n\bar{\tau}-2\bar{\tau}$ and we can write
	$$\begin{array}{l}
	\vspace{0.2cm}\displaystyle{S_{1}= \sum_{l:l\neq i}a_{il}(t)(\langle v_{l}(t-\tau(t)),v\rangle-M)+\sum_{l:l\neq i}a_{il}(t)(M-\langle v_{i}(t),v\rangle)}\\
	\displaystyle{\hspace{0.4cm}\leq \frac{e^{K\bar{\tau}}\phi(t-\bar{\tau})}{N-1}\sum_{l:l\neq i}(\langle v_{l}(t-\tau(t)),v\rangle-M)+\frac{K}{N-1}\sum_{l:l\neq i}(M-\langle v_{i}(t),v\rangle)}\\
	\displaystyle{\hspace{0.4cm}=\frac{e^{K\bar{\tau}}\phi(t-\bar{\tau})}{N-1}\sum_{l:l\neq i}(\langle v_{l}(t-\tau(t)),v\rangle-M)+K(M-\langle v_{i}(t),v\rangle),}
	\end{array}$$
	and
	$$\begin{array}{l}
		\vspace{0.2cm}\displaystyle{S_{2}=\sum_{l:l\neq j}a_{jl}(t)(\langle v_{j}(t),v\rangle-m)+\sum_{l:l\neq j}a_{jl}(t)(m-\langle v_{l}(t-\tau(t)),v\rangle)}\\
		\displaystyle{\hspace{0.4cm}\leq \frac{K}{N-1}\sum_{l:l\neq j}(\langle v_{j}(t),v\rangle-m)+\frac{e^{K\bar{\tau}}\phi(t-\bar{\tau})}{N-1}\sum_{l:l\neq j}(m-\langle v_{l}(t-\tau(t)),v\rangle)}\\
		\displaystyle{\hspace{0.4cm}=K(\langle v_{j}(t),v\rangle-m)+\frac{e^{K\bar{\tau}}\phi(t-\bar{\tau})}{N-1}\sum_{l:l\neq j}(m-\langle v_{l}(t-\tau(t)),v\rangle).}
	\end{array}$$
	Hence, we get
	$$\begin{array}{l}
	\vspace{0.2cm}\displaystyle{S_{1}+S_{2}\leq K(M-\langle v_{i}(t),v\rangle+\langle v_{j}(t),v\rangle-m)}\\
	\displaystyle{\hspace{1.5cm}+\frac{e^{K\bar{\tau}}\phi(t-\bar{\tau})}{N-1}\sum_{l:l\neq i,j}(\langle v_{l}(t-\tau(t)),v\rangle-M+m-\langle v_{l}(t-\tau(t)),v\rangle)}\\
	\vspace{0.2cm}\displaystyle{\hspace{1.5cm}+\frac{e^{K\bar{\tau}}\phi(t-\bar{\tau})}{N-1}(\langle v_{j}(t-\tau(t)),v\rangle-M+m-\langle v_{i}(t-\tau(t)),v\rangle)}\\
	\vspace{0.2cm}\displaystyle{\hspace{1.2cm}=K(M-m-d_{V}^{(ij)_{v}}(t))+\frac{e^{K\bar{\tau}}\phi(t-\bar{\tau})}{N-1}(N-2)(m-M)}\\
	\displaystyle{\hspace{1.5cm}+\frac{e^{K\bar{\tau}}\phi(t-\bar{\tau})}{N-1}(m-M-d_{V}^{(ij)_{v}}(t-\tau(t))).}
	\end{array}$$
	Note that, being $t\in [n\bar{\tau}-\bar{\tau},n\bar{\tau}]$, it holds that $t-\tau(t)\in [n\bar{\tau}-2\bar{\tau},n\bar{\tau}]$. Therefore, from our assumption, we have $d_{V}^{(ij)_{v}}(t-\tau(t))\geq 0$, from which follows that  $$\frac{e^{K\bar{\tau}}\phi(t-\bar{\tau})}{N-1}(m-M-d_{V}^{(ij)_{v}}(t-\tau(t))\leq\frac{e^{K\bar{\tau}}\phi(t-\bar{\tau})}{N-1}(m-M).$$
	Thus, taking into account of the fact that $M-m\leq D_{n-1}$, we get  
	$$\begin{array}{l}
	\vspace{0.2cm}\displaystyle{\frac{d}{dt}d_{V}^{(ij)_{v}}(t)\leq K(M-m-d_{V}^{(ij)_{v}}(t))+\frac{e^{K\bar{\tau}}\phi(t-\bar{\tau})}{N-1}(N-2)(m-M)+\frac{e^{K\bar{\tau}}\phi(t-\bar{\tau})}{N-1}(m-M)}\\
	\vspace{0.3cm}\displaystyle{\hspace{1.8cm}=K(M-m-d_{V}^{(ij)_{v}}(t))+e^{K\bar{\tau}}\phi(t-\bar{\tau})(m-M)}\\
	\vspace{0.3cm}\displaystyle{\hspace{1.8cm}=(K-e^{K\bar{\tau}}\phi(t-\bar{\tau}))(M-m)-Kd_{V}^{(ij)_{v}}(t)}\\
	\displaystyle{\hspace{1.8cm}\leq (K-e^{K\bar{\tau}}\phi(t-\bar{\tau}))D_{n-1}-Kd_{V}^{(ij)_{v}}(t),}
	\end{array}$$
	for every $t\in [n\bar{\tau}-\bar{\tau},n\bar{\tau}]$.
	Then, from the Gronwall's inequality, for every $t\in [n\bar{\tau}-\bar{\tau},n\bar{\tau}]$, we have
	$$\begin{array}{l}
	\vspace{0.2cm}\displaystyle{d_{V}^{(ij)_{v}}(t)\leq e^{-\int_{n\bar{\tau}-\bar{\tau}}^{t}Kds}d_{V}^{(ij)_{v}}(n\bar{\tau}-\bar{\tau})+D_{n-1}\int_{n\bar{\tau}-\bar{\tau}}^{t}(K-e^{K\bar{\tau}}\phi(s-\bar{\tau}))e^{-\left(\int_{n\bar{\tau}-\bar{\tau}}^{t}Kdv-\int_{n\bar{\tau}-\bar{\tau}}^{s}Kdv\right)}ds}\\
	\vspace{0.2cm}\displaystyle{\hspace{1.4cm}=e^{-K(t-n\bar{\tau}+\bar{\tau})}d_{V}^{(ij)_{v}}(n\bar{\tau}-\bar{\tau})+D_{n-1}\int_{n\bar{\tau}-\bar{\tau}}^{t}(K-e^{K\bar{\tau}})\phi(s-\bar{\tau})e^{-K(t-s)}ds}\\
	\vspace{0.2cm}\displaystyle{\hspace{1.4cm}=e^{-K(t-n\bar{\tau}+\bar{\tau})}d_{V}^{(ij)_{v}}(n\bar{\tau}-\bar{\tau})+D_{n-1}\left(e^{-Kt}[e^{Ks}]_{n\bar{\tau}-\bar{\tau}}^{t}-e^{K\bar{\tau}}\int_{n\bar{\tau}-\bar{\tau}}^{t}e^{-K(t-s)}\phi(s-\bar{\tau})ds\right)}\\
	\displaystyle{\hspace{1.4cm}=e^{-K(t-n\bar{\tau}+\bar{\tau})}d_{V}^{(ij)_{v}}(n\bar{\tau}-\bar{\tau})+D_{n-1}\left(1-e^{-K(t-n\bar{\tau}+\bar{\tau})}-e^{K\bar{\tau}}\int_{n\bar{\tau}-\bar{\tau}}^{t}e^{-K(t-s)}\phi(s-\bar{\tau})ds\right).}
	\end{array}$$
	In particular, for $t=n\bar{\tau}$ it holds $$\begin{array}{l}
	\vspace{0.3cm}\displaystyle{d_{V}^{(ij)_{v}}(n\bar{\tau})\leq e^{-K\bar{\tau}}d_{V}^{(ij)_{v}}(n\bar{\tau}-\bar{\tau})+D_{n-1}\left(1-e^{-K\bar{\tau}}-e^{K\bar{\tau}}\int_{n\bar{\tau}-\bar{\tau}}^{n\bar{\tau}}e^{-K(n\bar{\tau}-s)}\phi(s-\bar{\tau})ds\right)}\\
	\displaystyle{\hspace{1.7cm}\leq e^{-K\bar{\tau}}d_{V}^{(ij)_{v}}(n\bar{\tau}-\bar{\tau})+D_{n-1}\left(1-e^{-K\bar{\tau}}-e^{K\bar{\tau}}\int_{n\bar{\tau}-\bar{\tau}}^{n\bar{\tau}}\phi(s-\bar{\tau})ds\right).}
	\end{array}$$
	Notice that $d_{V}^{(ij)_{v}}(n\bar{\tau}-\bar{\tau})\leq D_{n-1}$ and that $$e^{K\bar{\tau}}\int_{n\bar{\tau}-\bar{\tau}}^{n\bar{\tau}}\phi(s-\bar{\tau})ds\geq \int_{n\bar{\tau}-\bar{\tau}}^{n\bar{\tau}}\phi(s-\bar{\tau})ds.$$
	So we can write
	$$d_{V}^{(ij)_{v}}(n\bar{\tau})\leq e^{-K\bar{\tau}}D_{n-1}+D_{n-1}\left(1-e^{-K\bar{\tau}}-\int_{n\bar{\tau}-\bar{\tau}}^{n\bar{\tau}}\phi(s-\bar{\tau})ds\right)$$$$= D_{n-1}\left(1-\int_{n\bar{\tau}-\bar{\tau}}^{n\bar{\tau}}\phi(s-\bar{\tau})ds\right).$$
	Then, with a change of variable, we get
	$$d_{V}^{(ij)_{v}}(n\bar{\tau})\leq D_{n-1}\left(1-\int_{n\bar{\tau}-2\bar{\tau}}^{n\bar{\tau}-\bar{\tau}}\phi(s)ds\right),$$and, being $D_{n-1}\leq D_{n-2}$, we can conclude that $$d_{V}^{(ij)_{v}}(n\bar{\tau})\leq\left(1-\int_{n\bar{\tau}-2\bar{\tau}}^{n\bar{\tau}-\bar{\tau}}\phi(s)ds\right)D_{n-2}.$$
	Therefore, \eqref{dv} holds true.
	\\Now, we are able to prove \eqref{n-2}. Indeed, for each $n\geq 2$, from \eqref{n+1} and \eqref{n-2}, it immediately follows that
	$$D_{n+1}\leq e^{-K\bar{\tau}}d_{V}(n\bar{\tau})+(1-e^{-K\bar{\tau}})D_{n}$$$$\hspace{4.5cm}\leq e^{-K\bar{\tau}} \left(1-\int_{n\bar{\tau}-2\bar{\tau}}^{n\tau_{1}-\bar{\tau}}\phi(s)ds\right)D_{n-2}+(1-e^{-K\bar{\tau}})D_{n-2}$$$$\hspace{1.4cm}=\left(1-e^{-K\bar{\tau}}\int_{n\bar{\tau}-2\bar{\tau}}^{n\bar{\tau}-\bar{\tau}}\phi(s)ds\right)D_{n-2}.$$
\end{proof}
\section{Proof of Theorem 1.1}
\setcounter{equation}{0}

\begin{proof}
	Let $\{(x_{i},v_{i})\}_{i=1,\dots,N}$  be solution to \eqref{csp} under the initial conditions \eqref{incond}. Following Rodriguez Cartabia \cite{Cartabia}, we introduce the function $\mathcal{D}:[-\bar{\tau},\infty)\rightarrow [0,\infty),$ defined as 
		$$\mathcal{D}(t):=\begin{cases}
		D_{0}, &t\in [-\bar{\tau},2\bar{\tau}]\\\mathcal{D}(n\bar{\tau})\left(1-e^{-K\bar{\tau}}\int_{n\bar{\tau}}^{t}\phi(s)ds\right)^{\frac{1}{3}}, &t\in (n\bar{\tau},n\bar{\tau}+\bar{\tau}],\,n\geq 2
		\end{cases}.$$
	By construction, $\mathcal{D}$ is continuous and nonincreasing. Moreover, we claim that \begin{equation}\label{boundIn}
		D_{n}\leq \mathcal{D}(t),
	\end{equation} for all $n\in \mathbb{N}_{0}$ and $t\in [-\bar{\tau},n\bar{\tau}]$.
	To prove this, we first show that, for each $n\geq 3$, \begin{equation}\label{int}
		1-e^{-K\bar{\tau}}\int_{n\bar{\tau}-2\bar{\tau}}^{n\bar{\tau}-\bar{\tau}}\phi(s)ds\leq \frac{\mathcal{D}(n\bar{\tau}+\bar{\tau})}{\mathcal{D}(n\bar{\tau}-2\bar{\tau})}.
	\end{equation}
	So, let $n\geq 3$. We split $$1-e^{-K\bar{\tau}}\int_{n\bar{\tau}-2\bar{\tau}}^{n\bar{\tau}-\bar{\tau}}\phi(s)ds$$$$=\left(1-e^{-K\bar{\tau}}\int_{n\bar{\tau}-2\bar{\tau}}^{n\bar{\tau}-\bar{\tau}}\phi(s)ds\right)^{\frac{1}{3}}\left(1-e^{-K\bar{\tau}}\int_{n\bar{\tau}-2\bar{\tau}}^{n\bar{\tau}-\bar{\tau}}\phi(s)ds\right)^{\frac{1}{3}}\left(1-e^{-K\bar{\tau}}\int_{n\bar{\tau}-2\bar{\tau}}^{n\bar{\tau}-\bar{\tau}}\phi(s)ds\right)^{\frac{1}{3}}.$$
	Now, it is easy to see that $\phi$ is a nonincreasing function. Thus, for each $m\geq n$, $$\int_{n\bar{\tau}-2\bar{\tau}}^{n\bar{\tau}-\bar{\tau}}\phi(s)ds\geq \int_{m\bar{\tau}-2\bar{\tau}}^{m\bar{\tau}-\bar{\tau}}\phi(s)ds.$$
	So we can write $$1-e^{-K\bar{\tau}}\int_{n\bar{\tau}-2\bar{\tau}}^{n\bar{\tau}-\bar{\tau}}\phi(s)ds$$$$\leq \left(1-e^{-K\bar{\tau}}\int_{n\bar{\tau}-2\bar{\tau}}^{n\bar{\tau}-\bar{\tau}}\phi(s)ds\right)^{\frac{1}{3}}\left(1-e^{-K\bar{\tau}}\int_{n\bar{\tau}-\bar{\tau}}^{n\bar{\tau}}\phi(s)ds\right)^{\frac{1}{3}}\left(1-e^{-K\bar{\tau}}\int_{n\bar{\tau}}^{n\bar{\tau}+\bar{\tau}}\phi(s)ds\right)^{\frac{1}{3}}$$$$\,\,\,=\frac{\mathcal{D}(n\bar{\tau}-\bar{\tau})}{\mathcal{D}(n\bar{\tau}-2\bar{\tau})}\,\frac{\mathcal{D}(n\bar{\tau})}{\mathcal{D}(n\bar{\tau}-\bar{\tau})}\,\frac{\mathcal{D}(n\bar{\tau}+\bar{\tau})}{\mathcal{D}(n\bar{\tau})}=\frac{\mathcal{D}(n\bar{\tau}+\bar{\tau})}{\mathcal{D}(n\bar{\tau}-2\bar{\tau})}\,,$$
	from which \eqref{int} holds true.
	\\At this point, we are able to prove \eqref{boundIn}. By induction, if $n\leq 2$, from Lemma \ref{lemma2} we can immediately say that $$D_{n}\leq D_{0}=\mathcal{D}(t), $$
	for all $t\in [-\bar{\tau},2\bar{\tau}]$. So we can assume that \eqref{boundIn} holds for each $2< m\leq n$ and prove it for $n+1$. From the induction hypothesis and by using again Lemma \ref{lemma2}, we have $$D_{n+1}\leq D_{n}\leq \mathcal{D}(t),$$
	for all $t\in [-\bar{\tau},n\bar{\tau}]$. On the other hand, for all $t\in (n\bar{\tau},n\bar{\tau}+\bar{\tau}]$, being $n>2$, from \eqref{n-2} we get $$D_{n+1}\leq \left(1-e^{-K\bar{\tau}}\int_{n\bar{\tau}-2\bar{\tau}}^{n\bar{\tau}-\bar{\tau}}\phi(s)ds\right)D_{n-2}.$$
	From the induction hypothesis or from the base case, $D_{n-2}\leq \mathcal{D}(t)$, for each $t\in [-\bar{\tau},n\bar{\tau}-2\bar{\tau}]$ So, in particular, $D_{n-2}\leq \mathcal{D}(n\bar{\tau}-2\bar{\tau})$. Therefore, combining this with \eqref{int} and with the fact that $\mathcal{D}$ is nonincreasing, we have that
	$$D_{n+1}\leq \frac{\mathcal{D}(n\bar{\tau}+\bar{\tau})}{\mathcal{D}(n\bar{\tau}-2\bar{\tau})}\,D_{n-2}\leq \frac{\mathcal{D}(n\bar{\tau}+\bar{\tau})}{\mathcal{D}(n\bar{\tau}-2\bar{\tau})}\,\mathcal{D}(n\bar{\tau}-2\bar{\tau})=\mathcal{D}(n\bar{\tau}+\bar{\tau})\leq \mathcal{D}(t),$$
	for all $t\in (n\bar{\tau},n\bar{\tau}+\bar{\tau}]$, which proves \eqref{boundIn}. 
	\\Now, notice that, for almost all time  $$\frac{d}{dt}\max_{s\in [-\bar{\tau},t]}d_{X}(s)\leq \left\lvert \frac{d}{dt}d_{X}(t)\right\rvert,$$
	since $\max_{s\in [-\bar{\tau},t]}d_{X}(s)$ is constant or increases like $d_{X}(t)$. Moreover, for almost all time $$\left\lvert \frac{d}{dt}d_{X}(t)\right\rvert\leq d_{V}(t).$$
	To see this, let $i,j=1,\dots,N$ be such that $d_{X}(t)=\lvert x_{i}(t)-xj(t)\rvert$. Obviously, if $\left\lvert\frac{d}{dt}d_{X}(t)\right\rvert=0$, then $$\left\lvert \frac{d}{dt}d_{X}(t)\right\rvert=0\leq d_{V}(t).$$
	So we can assume $\left\lvert\frac{d}{dt}d_{X}(t)\right\rvert>0$. Notice that $$\frac{d}{dt}( d_{X}(t))^{2}=\frac{d}{dt}\lvert x_{i}(t)-x_{j}(t)\rvert^{2}=2\lvert x_{i}(t)-x_{j}(t)\rvert\,\frac{d}{dt}\lvert x_{i}(t)-x_{j}(t)\rvert$$$$=2\lvert x_{i}(t)-x_{j}(t)\rvert\,\frac{d}{dt}d_{X}(t),$$
	with $\lvert x_{i}(t)-x_{j}(t)\rvert>0$, since otherwise $d_{X}(\cdot)$ wouldn't be differentiable at t. Also, $$\frac{d}{dt}(d_{X}(t))^{2}=2\langle v_{i}(t)-v_{j}(t), x_{i}(t)-x_{j}(t)\rangle,$$
	so that $$\lvert x_{i}(t)-x_{j}(t)\rvert\,\frac{d}{dt}d_{X}(t)=\langle v_{i}(t)-v_{j}(t), x_{i}(t)-x_{j}(t)\rangle.$$
	Thus,$$\lvert x_{i}(t)-x_{j}(t)\rvert\,\left\lvert\frac{d}{dt}d_{X}(t)\right\rvert=\lvert\langle v_{i}(t)-v_{j}(t), x_{i}(t)-x_{j}(t)\rangle \rvert\leq \lvert v_{i}(t)-v_{j}(t)\rvert\lvert x_{i}(t)-x_{j}(t)\rvert,$$
	from which, dividing by $\lvert x_{i}(t)-x_{j}(t)\rvert$, we get $$\left\lvert\frac{d}{dt}d_{X}(t)\right\rvert\leq \lvert v_{i}(t)-v_{j}(t)\rvert\leq  d_{V}(t).$$
	Therefore, for almost all time
	\begin{equation}\label{max}
		\frac{d}{dt}\max_{s\in [-\bar{\tau},t]}d_{X}(s)\leq \left\lvert \frac{d}{dt}d_{X}(t)\right\rvert\leq  d_{V}(t).
	\end{equation}
	Next, let $\mathcal{L}:[-\bar{\tau},\infty)\rightarrow [0,\infty)$ be the function given by $$\mathcal{L}(t):=\mathcal{D}(t)+\frac{e^{-K\bar{\tau}}}{3}\int_{0}^{2\bar{\tau}R_{V}^{0}+4M_{X}^{0}+\underset{s\in [-\bar{\tau},t]}{\max}d_{X}(s)}\min\left\{e^{-K\bar{\tau}}\min_{\sigma\in [0,r]}\psi(\sigma),\frac{e^{-2K\bar{\tau}}}{\bar{\tau}}\right\}dr,$$
	for all $t\geq-\bar{\tau}$.	By definition, $\mathcal{L}$ is continuous. In addition, for each $n\geq2$ and for $a.\,e.\,t\in(n\bar{\tau},n\bar{\tau}+\bar{\tau}) $, we have that $$\frac{d}{dt}\mathcal{L}(t)=\frac{d}{dt}\mathcal{D}(t)+\frac{e^{-K\bar{\tau}}}{3}\min\left\{e^{-K\bar{\tau}}\psi_{t},\frac{e^{-2K\bar{\tau}}}{\bar{\tau}}\right\}\frac{d}{dt}\underset{s\in [-\bar{\tau},t]}{\max}d_{X}(s)$$$$=\frac{d}{dt}\mathcal{D}(t)+\frac{e^{-K\bar{\tau}}}{3}\phi(t)\frac{d}{dt}\underset{s\in [-\bar{\tau},t]}{\max}d_{X}(s),$$
	and from \eqref{max} we get $$\frac{d}{dt}\mathcal{L}(t)\leq \frac{d}{dt}\mathcal{D}(t)+\frac{e^{-K\bar{\tau}}}{3}\phi(t)d_{V}(t).$$
	Now, for $a.\,e.\,t\in(n\bar{\tau},n\bar{\tau}+\bar{\tau})$, with $n\geq 2$, we compute $$\frac{d}{dt}\mathcal{D}(t)=-\frac{1}{3}\mathcal{D}(n\bar{\tau})\left(1-e^{-K\bar{\tau}}\int_{n\bar{\tau}}^{t}\phi(s)ds\right)^{-\frac{2}{3}}e^{-K\bar{\tau}}\phi(t).$$
	Thus, for each $n\geq 2$ and for $a.\,e.\,t\in(n\bar{\tau},n\bar{\tau}+\bar{\tau})$, $$\frac{d}{dt}\mathcal{L}(t)\leq\frac{e^{-K\bar{\tau}}}{3}\phi(t)\left(d_{V}(t)-\frac{\mathcal{D}(n\bar{\tau})}{\left(1-e^{-K\bar{\tau}}\int_{n\bar{\tau}}^{t}\phi(s)ds\right)^{\frac{2}{3}}}\right)$$$$\leq \frac{e^{-K\bar{\tau}}}{3}\phi(t)(d_{V}(t)-\mathcal{D}(n\bar{\tau})).$$
	Lastly, we can note that $d_{V}(t)\leq \mathcal{D}(n\bar{\tau})$, since $d_{V}(t)\leq D_{n+1}$ and $D_{n+1}\leq \mathcal{D}(n\bar{\tau})$ from inequality \eqref{boundIn}. Then, we get \begin{equation}\label{negder}
		\frac{d}{dt}\mathcal{L}(t)\leq0,
	\end{equation}
	for $a.\,e.\,t\in (n\bar{\tau},n\bar{\tau}+\bar{\tau})$ and for each $n\geq2$. Integrating \eqref{negder} over $(2\bar{\tau},t)$ for $t>2\bar{\tau}$ it comes that \begin{equation}\label{2tau1}
		\mathcal{L}(t)\leq \mathcal{L}(2\bar{\tau}).
	\end{equation}
	Therefore, from \eqref{2tau1}, it holds \begin{equation}\label{lim}
		\frac{e^{-K\bar{\tau}}}{3}\int_{0}^{\bar{\tau}R_{V}^{0}+2M_{X}^{0}+\underset{s\in [-\bar{\tau},t]}{\max}d_{X}(s)}\min\left\{e^{-K\bar{\tau}}\min_{\sigma\in [0,r]}\psi(\sigma),\frac{e^{-2K\bar{\tau}}}{\bar{\tau}}\right\}dr\leq\mathcal{L}(2\bar{\tau}),
	\end{equation}
	for all $t\geq2\bar{\tau}$. Letting $t\to \infty$ in \eqref{lim}, we finally get \begin{equation}\label{lim2}
		\frac{e^{-K\bar{\tau}}}{3}\int_{0}^{\bar{\tau}R_{V}^{0}+2M_{X}^{0}+\underset{s\in [-\bar{\tau},\infty)}{\sup}d_{X}(s)}\min\left\{e^{-K\bar{\tau}}\min_{\sigma\in [0,r]}\psi(\sigma),\frac{e^{-2K\bar{\tau}}}{\bar{\tau}}\right\}dr\leq \mathcal{L}(2\bar{\tau}).
	\end{equation} 
	Finally, since the function $\psi$ satisfies property \eqref{infint}, from \eqref{lim2}, we can conclude that there exists a positive constant $d^{*}$ such that \begin{equation}\label{firstcond}
		\bar{\tau}R_{V}^{0}+2M_{X}^{0}+\underset{s\in [-\bar{\tau},\infty)}{\sup}d_{X}(s)\leq d^{*}.
	\end{equation}
	Indeed, assume by contradiction that \begin{equation}\label{assurdo}
		\bar{\tau}R_{V}^{0}+2M_{X}^{0}+\underset{s\in [-\bar{\tau},\infty)}{\sup}d_{X}(s)=+\infty.
	\end{equation}
Then, equation \eqref{lim2} reads as
\begin{equation}\label{assurdo2}
	\int_{0}^{+\infty}\min\left\{e^{-K\bar{\tau}}\min_{\sigma\in [0,r]}\psi(\sigma),\frac{e^{-2K\bar{\tau}}}{\bar{\tau}}\right\}dr\leq \mathcal{L}(2\bar{\tau})
\end{equation}
Now, two different situations can occur.
\\Case I) Assume that, for all $r\in [0,+\infty)$, $$\frac{e^{-2K\bar{\tau}}}{\bar{\tau}}\leq e^{-K\bar{\tau}}\min_{\sigma\in [0,r]}\psi(\sigma) .$$
Thus, $$\int_{0}^{+\infty}\min\left\{e^{-K\bar{\tau}}\min_{\sigma\in [0,r]}\psi(\sigma),\frac{e^{-2K\bar{\tau}}}{\bar{\tau}}\right\}dr=\int_{0}^{+\infty}\frac{e^{-2K\bar{\tau}}}{\bar{\tau}}dr=+\infty,$$
which is in contradiction with \eqref{assurdo2}.
\\Case II) Assume that there exists $r_1\in [0,+\infty)$ such that $$e^{-K\bar{\tau}}\min_{\sigma\in [0,r_1]}\psi(\sigma)<\frac{e^{-2K\bar{\tau}}}{\bar{\tau}}.$$
Note that, for all $r\geq r_1$, it holds that
$$\min_{\sigma\in [0,r]}\psi(\sigma)\leq \min_{\sigma\in [0,r_1]}\psi(\sigma),$$
from which 
$$e^{-K\bar{\tau}}\min_{\sigma\in [0,r]}\psi(\sigma)< \frac{e^{-2K\bar{\tau}}}{\bar{\tau}},\quad \forall r\geq r_1.$$
Thus, using \eqref{infint} we can write $$\int_{0}^{+\infty}\min\left\{e^{-K\bar{\tau}}\min_{\sigma\in [0,r]}\psi(\sigma),\frac{e^{-2K\bar{\tau}}}{\bar{\tau}}\right\}dr\geq \int_{r_1}^{+\infty}\min\left\{e^{-K\bar{\tau}}\min_{\sigma\in [0,r]}\psi(\sigma),\frac{e^{-2K\bar{\tau}}}{\bar{\tau}}\right\}dr$$$$= e^{-K\bar{\tau}}\int_{r_1}^{+\infty}\min_{\sigma\in [0,r]}\psi(\sigma)dr=+\infty.$$
Hence, also in this case we get a contradiction.
\\As a consequence, in all the two possible situations we get a contradiction and we deduce the existence of a positive constant $d^*$ for which inequality \eqref{firstcond} is fulfilled.
\\Finally, we define
	$$\phi^{*}:=\min\left\{e^{-K\bar{\tau}}\psi_{*},\frac{e^{-2K\bar{\tau}}}{\bar{\tau}}\right\},$$
	where $$\psi_{*}=\min_{r\in[0,d^{*}]}\psi(r).$$
	Note that $\phi^{*}>0$, being $\psi$ a positive function. Also, from \eqref{firstcond}, it comes that $$\psi_{*}\leq \min\left\{\psi(r):r\in \left[0,\bar{\tau}R_{V}^{0}+2M_{X}^{0}+\max_{s\in [-\bar{\tau},t]}d_{X}(s)\right]\right\}=\psi_{t},$$
	for all $t\geq -\bar{\tau}$. Thus, we get $$\phi^{*}\leq \phi(t), \quad \forall t\geq -\bar{\tau}.$$
	This implies that, for each $n\geq 2$
	\begin{equation}\label{phi*}
		\begin{split}
		\left(1-e^{-K\bar{\tau}}\int_{n\bar{\tau}}^{n\bar{\tau}+\bar{\tau}}\phi(s)ds\right)^{\frac{1}{3}}&\leq\left(1-e^{-K\bar{\tau}}\int_{n\bar{\tau}}^{n\bar{\tau}+\bar{\tau}}\phi^{*} ds\right)^{\frac{1}{3}}\\&\vspace{0.5cm}=\left(1-e^{-K\bar{\tau}}\phi^{*}\bar{\tau}\right)^{\frac{1}{3}} ,
		\end{split}
	\end{equation}
	with $\left(1-e^{-K\bar{\tau}}\phi^{*}\bar{\tau}\right)^{\frac{1}{3}}<1$.
	\\Next, we set $$C=\frac{1}{3\bar{\tau}}\ln\left(\frac{1}{1-e^{-K\bar{\tau}}\phi^{*}\bar{\tau}}\right)>0.$$
	Notice that $C$ is a constant independent of $N$. Moreover, we have $$\left(1-e^{-K\bar{\tau}}\phi^{*}\bar{\tau}\right)^{\frac{1}{3}}=e^{-C\bar{\tau}},$$
	so that \eqref{phi*} becomes
	\begin{equation}\label{C}
		\left(1-e^{-K\bar{\tau}}\int_{n\bar{\tau}}^{n\bar{\tau}+\bar{\tau}}\phi(s)ds\right)^{\frac{1}{3}}\leq e^{-C\bar{\tau}}, \quad \forall n\geq 2.
	\end{equation} Now we claim that, for each $n\geq 2$, it holds
	\begin{equation}\label{seccond}
		\mathcal{D}(n\bar{\tau})\leq D_{0}e^{-C(n-2)\bar{\tau}}.
	\end{equation}
	Indeed, by induction, if $n=2$ then trivially $\mathcal{D}(2\bar{\tau})=D_{0}$ and the claim holds. So suppose \eqref{seccond} holds true for $n>2$ and prove it for $n+1$. From the induction hypothesis and by recalling of \eqref{C}, we can write $$\mathcal{D}(n\bar{\tau}+\bar{\tau})=\mathcal{D}(n\bar{\tau})\left(1-e^{-K\bar{\tau}}\int_{n\bar{\tau}}^{n\bar{\tau}+\bar{\tau}}\phi(s)ds\right)^{\frac{1}{3}}$$$$\leq D_{0}e^{-C(n-2)\bar{\tau}}e^{-C\bar{\tau}}=D_{0}e^{-C(n+1-2)\bar{\tau}}.$$
	Hence, from \eqref{boundIn} and \eqref{seccond} it follows that, for each $t>2\bar{\tau}$, if $t\in (n\bar{\tau},n\bar{\tau}+\bar{\tau})$, for some $n\geq2$, $$d_{V}(t)\leq D_{n+1}\leq \mathcal{D}(n\bar{\tau}+\bar{\tau})\leq D_{0}e^{-C(n+1-2)\bar{\tau}}\leq D_{0}e^{-C(t-2\bar{\tau})}.$$
	Thus, combining this with the fact that, for all $[-\bar{\tau},2\bar{\tau}]$,$$d_{V}(t)\leq D_{0}\leq D_{0}e^{-C(t-2\bar{\tau})},$$
	we can conclude that estimate \eqref{vel} holds too.
\end{proof}

\bigskip

\parindent=0pt
\providecommand{\Dec}[1]{\textbf{Declarations:} #1}
\Dec
\bigskip

\parindent=0pt
{\bf Ethics approval and consent to participate}: Not applicable.

{\bf Consent for publication}: Not  applicable.

{\bf Availability of data and materials}: Not applicable. 

{\bf Competing interests}:  The author declares that she has no competing interests.

{\bf Funding}: Not applicable.

{\bf Authors' contributions}: The author confirms sole responsibility for  conceptualization, investigation, analysis, writing and editing.

{\bf Acknowledgements}: The author 
acknowledges the support of  UNIVAQ.

\end{document}